\def\gp#1{\langle #1 \rangle}
\newtheorem{theorem}{Theorem}
\newtheorem{lemma}[subsection]{Lemma}
\newtheorem{corollary}[subsection]{Corollary}
\title[$d$-generator property]{Modules over some group rings, having $d$-generator property}
\thanks{The  research was supported by the UAEU UPAR Grant G00002160}
\author[Bovdi]{V.A.~ Bovdi}
\address{UAEU, Al-Ain, United Arab Emirates}
\email{vbovdi@gmail.com}
\author[Kurdachenko]{L.A.~Kurdachenko}
\address{ Department of Algebra and Geometry,
School of Mathematics and Mechanics,
National University of Dnipro, Ukraine}
\email{lkurdachenko@i.ua}
\subjclass{20C05; 20F50 Secondary: 20D25; 16D10; 16D70; 16D80}
\keywords{module over a group ring, $r$-generator property, width of a module}
\begin{document}

\maketitle

\begin{abstract}
For modules over group rings we introduce  the following numerical parameter. We say that a module $A$ over a ring $R$ has finite $r$-generator property if each f.g. (finitely generated) $R$-submodule of $A$  can be generated exactly by $r$  elements and there exists a f.g. $R$-submodule $D$  of $A$, which has a minimal generating subset, consisting exactly of $r$  elements.
Let $FG$  be the group algebra of a finite group $G$ over  a field  $F$. In the present paper modules over the  algebra  $FG$  having finite generator property are described.
\end{abstract}

\section{Introduction and main results}

Let $R$ be an associate  ring and let $A$  be a module over the ring $R$. The  structure and the properties of $A$ essentially depend on the properties of the  family of its finitely generated submodules. Clearly, if $B$ is a finitely generated submodule of $A$ then   $B$  can have different generator systems. Therefore, it makes sense to consider a minimal system of generators. Recall that a subset $M\subseteq B$  is called {\it a minimal generating subset} for $B$ if $M$ generates $B$ and each  proper subset of $M$ generates a proper submodule of $B$. If $R$ is a field and $A$ is a vector space, then the number of elements in each minimal system of generators for  $B$  is the same, i.e. it is an invariant of  $B$. Generally  such a situation does not always take place.  Moreover, there are not so many rings for which this holds. Therefore, in the submodule  $B$, we consider those minimal systems of generators that have the least number of elements. The number of elements in such minimal system of generators is denoted by $d_R(A)$.

If $A$ is a vector space over a field $F$, then $d_F(A)$ is exactly the dimension of $A$. However, when moving from vector spaces to modules, the properties of the numerical invariant $d_R (A)$ can already differ significantly from the classical properties of the dimensions of vector subspaces. For example, if $B$ is a proper subspace of the the  vector space $A$, then  $\dim_F (B)<  \dim_F (A)$. Unlike vector spaces, not each submodule of a finitely generated module is finitely generated. Moreover, in the case when a submodule is finitely generated it can has a minimal generating set with more elements than the number of elements in the generating system of the entire module. The following simple example illustrates this situation.

Let $p$  be  a prime. Let  $G = \gp{g}\times \gp{h}\cong C_p\times C_p$ be a  multiplicative $p$-group of order  $p^2$,  $A = \gp{b}\oplus \gp{c}\oplus  \gp{d}\cong \mathbb{Z}_p\oplus \mathbb{Z}_p\oplus \mathbb{Z}_p$ an  additive elementary abelian  $p$-group of order $p^3$ and  let $H=\gp{c}\oplus  \gp{d}\leq A$.   Define a right   action  $\circ$ of  $g$ and $h$   on  $A$  (concerning  the   basis  $\{ b, c, d\}$)   by the following rule:
\[
a\circ g=
\begin{pmatrix}
 b\\ c\\ d\\
\end{pmatrix}
\begin{pmatrix}
1&  0 & 0\\
1&  1 & 0 \\
0 & 0 & 1\\
\end{pmatrix}
\quad\text{and}\quad
a\circ h=\begin{pmatrix}
 b\\ c\\ d\\
\end{pmatrix}
\begin{pmatrix}
1&  0 & 0\\
0&  1 & 0 \\
1 & 0 & 1\\
\end{pmatrix},
\qquad (a\in A).
\]
In other  words  $b\circ  g = b + c$, $c\circ g = c$, $d\circ g = d$, $b\circ  h = b + d$, $c\circ h = c$ and  $d\circ h = d$. Clearly,   $A = \gp{b}_{\mathbb{Z}_pG}$  is a  cyclic right  $\mathbb{Z}_pG$-module, in which  $d_{\mathbb{Z}_pG}(A) = 1$, but $d_{\mathbb{Z}_pG}(H) = 2$.

Each ring $R$ and their ideals are itself the modules over $R$. Therefore, it is natural to consider first the number of generating elements (left or right) ideals in the ring $R$. It is clear that it makes sense to do this for the rings whose (left or right) ideals are finitely generated, i.e. for the Noetherian rings. In \cite{CI1950} those commutative rings  $R$ were considered, for which there exists a positive integer $r$ such that $d_R (I)\leq r$  for each ideal $I$ of  $R$. It is immediately clear that all examples of such rings are principal ideal domains and Dedekind domains. Commutative rings with such  property actively  studied in \cite{CI1950, GR1969, GR1972, GR1973, Heitmann_1976, Matson_2009, Pettersson_1999, Swan_1984}.  Gilmer \cite{GR1969}  generalized this situation, where he considered  commutative rings for which there exists a positive integer $r$ such that $d_R (I) \leq r$  for every finitely generated ideal $I$ of $R$. Such rings are called {\it rings with $r$ - generator property}. The most famous examples of such rings (see, for example, \cite[Chapter II.1, Chapter III.5]{FS2001}) are  valuation domains and   Bezout domains (i.e. domains in which every finitely generated ideal  is principal). Commutative rings having  $r$-generator property have been studied in  \cite{GR1969, GR1972, GR1973, Heitmann_1976, Matson_2009, Pettersson_1999, Swan_1984}.
In  group theory, an analogue of this concept arose earlier in the work of Maltsev \cite{MAL1948}. Following Maltsev,  we say that a group $G$  has finite special rank $r$, where $r$ is a positive integer, if $d(H)\leq r$  for every finitely generated subgroup $H$ of the  group $G$.
In group theory this concept turned out to be very useful and productive (for example, see the  survey \cite{Giovanni_2013} and the  book \cite{DKS2017}).
Thus, it is a native  to consider an analog of this natural  concept for modules as well.
Let $A$ be a module over a ring $R$. We say that $A$ has {\it $d$-$R$-generator property} if there exists a positive integer $d$ such that $d_R (B)\leq d$ for each finitely generated $R$-submodule $B$ of $A$. If we consider only one fixed ring $R$, then we  simply say that $A$ has $d$-generator property.

In \cite{Bovdi_Kurdachenko} modules were studied which have $d$-generator property over the   group algebra $FG$,    where $F$  is a field of characteristic 0 and $G$ is a finite group. In the paper mentioned above  we used another term - "special rank", but in ring theory there are already several terms using the word rank, so in this paper  we preferred to use the term introduced by R.~Gilmer for the rings.

In the present  paper we study  modules having $d$-generator property over the group  algebras  $FG$  where $F$  is a field of arbitrary characteristic and $G$ is a finite group. The main results are the following.

\begin{theorem}\label{Th:1}
Let  $r\in\mathbb{N}$ and let  $FG$ be  the  group algebra of a finite group $G$ over  a field $F$.
An  $FG$-module $A$  has a finite   $r$-generator property  if and only if  the following conditions hold:
\begin{itemize}
\item[(i)]   $\dim_F(A)$ is  finite;

\item[(ii)] each  semisimple homogeneous  $FG$-factor of  $A$  has width at most  $r$  and there exists a semisimple homogeneous  $FG$-factor, whose width is exactly  $r$.
\end{itemize}
\end{theorem}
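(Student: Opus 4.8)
The plan is to reduce the computation of $d_{FG}(B)$ for a finitely generated submodule $B$ to its semisimple head. Since $G$ is finite, $FG$ is a finite-dimensional (hence Artinian) $F$-algebra with Jacobson radical $J=J(FG)$, and every finitely generated $FG$-module is finite-dimensional over $F$. By Nakayama's lemma a finite subset of $B$ generates $B$ if and only if its image generates the semisimple factor $B/JB$; consequently $d_{FG}(B)=d_{FG}(B/JB)$. Decomposing the head as $B/JB\cong\bigoplus_{S}S^{(m_S(B))}$ over the isomorphism types of simple $FG$-modules $S$, and using that distinct isotypic components impose independent conditions on an epimorphism $(FG)^t\twoheadrightarrow B/JB$, I obtain
\[
d_{FG}(B)=\max_{S}d_{FG}\bigl(S^{(m_S(B))}\bigr)=\max_{S}\bigl(\text{width of }S^{(m_S(B))}\bigr),
\]
where the width of the homogeneous semisimple module $S^{(k)}$ is $d_{FG}(S^{(k)})=\lceil k/a_S\rceil$ and $a_S$ denotes the multiplicity of $S$ in the regular head $FG/J$. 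Thus $d_{FG}(B)$ is exactly the largest width among the homogeneous components of the head of $B$, which is the bridge between the generator count and condition (ii).

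For the implication ``finite $r$-generator property $\Rightarrow$ (i) and (ii)'', I first establish (i). As each finitely generated submodule $B$ satisfies $d_{FG}(B)\le r$ and is generated by at most $r$ elements, we have $\dim_F B\le r\,|G|$; picking a finitely generated submodule $B$ of maximal $F$-dimension and noting that any $a\in A\setminus B$ would enlarge it to the finitely generated submodule $B+FG\cdot a$ of strictly larger dimension, I conclude $B=A$, so $\dim_F A\le r\,|G|<\infty$. For (ii), given a homogeneous semisimple factor $U/V\cong S^{(k)}$ of $A$, the epimorphism $U\twoheadrightarrow S^{(k)}$ factors through $U/JU$ (a map into a semisimple module kills the radical), so the multiplicity of $S$ in the head of $U$ is at least $k$; hence the width of $S^{(k)}$ is at most $d_{FG}(U)\le r$. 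The existence of a factor of width exactly $r$ comes from the submodule $D$ with $d_{FG}(D)=r$: the above formula gives a simple $S$ with $\lceil m_S(D)/a_S\rceil=r$, and $S^{(m_S(D))}$, being a homogeneous summand of the head $D/JD$, is a factor of $A$ of width $r$.

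For the converse ``(i) and (ii) $\Rightarrow$ finite $r$-generator property'', finiteness of $\dim_F A$ makes every submodule finite-dimensional and hence finitely generated, so the finitely generated submodules are simply all submodules. For any submodule $B$, every homogeneous component $S^{(m_S(B))}$ of its head is a factor of $A$, so its width is at most $r$ by (ii); the displayed formula then gives $d_{FG}(B)\le r$. Finally, if $U/V\cong S^{(k)}$ is a homogeneous semisimple factor of width exactly $r$, then taking $B=U$ the preceding paragraph shows $d_{FG}(U)\ge\lceil k/a_S\rceil=r$, while the bound just proved gives $d_{FG}(U)\le r$; hence $D=U$ realizes $d_{FG}(D)=r$, completing the finite $r$-generator property.

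The main obstacle I anticipate is the passage between factors (subquotients $U/V$) and honest submodules, since the generator invariant $d_{FG}$ is sensitive only to submodules while condition (ii) is phrased in terms of sections. The device that resolves this is that any homomorphism into a semisimple module annihilates $J$ and therefore factors through the head; this lets a width-$w$ homogeneous semisimple section $U/V$ be detected inside the head of the numerator $U$, forcing $d_{FG}(U)\ge w$, and conversely lets a width-$r$ component of a head be exhibited as a genuine factor of $A$. A secondary point requiring care is that, in the modular case, $FG$ is not semisimple, so the reductions must be carried out consistently with $J$ and the heads rather than with any naive ``isotypic decomposition'' of $A$ itself.
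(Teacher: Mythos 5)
Your reduction runs parallel to the paper's: where you use the Jacobson radical $J=J(FG)$ and Nakayama's lemma to get $d_{FG}(B)=d_{FG}(B/JB)$, the paper uses the Frattini submodule and proves the same identity in Lemma~\ref{LeM:7} (for finitely generated $FG$-modules the submodules $JB$ and $Fratt_{FG}(B)$ coincide, so this is the same step in different clothing), and your treatment of condition (i) and of the passage between sections and honest submodules matches Lemma~\ref{LeM:5} and the structure of the paper's proof of Theorem~\ref{Th:1} closely. The genuine divergence is in how the number of generators of a semisimple module is computed. The paper, via the Krull--Remak--Schmidt argument of Corollaries~\ref{CoR:5} and~\ref{Cor:6}, asserts that $d_R$ of a homogeneous semisimple module $S^{(k)}$ equals $k$, the number of simple summands, and that number is what ``width'' means there (see Corollary~\ref{CoR:5}(iii)). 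You instead prove
\[
d_{FG}\bigl(\oplus_S S^{(m_S)}\bigr)=\max_S\,\lceil m_S/a_S\rceil,
\]
with $a_S$ the multiplicity of $S$ in $FG/J$, and you \emph{define} the width of $S^{(k)}$ to be $\lceil k/a_S\rceil$. These two notions agree only when every $a_S=1$, i.e. when every Wedderburn component of $FG/J$ is a division ring (for instance when $G$ is abelian); in general they differ, so you have silently changed the statement being proved.

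This is not a pedantic discrepancy, because your formula is the correct one and the paper's is not. Take $F=\mathbb{C}$, $G=S_3$, and $S$ the two-dimensional simple module, so that $a_S=2$. Then $S\oplus S$ is isomorphic as an $FG$-module to the Wedderburn component $M_2(\mathbb{C})$ of $\mathbb{C}S_3$, hence is cyclic, and indeed every submodule of $S\oplus S$ is cyclic; so $A=S\oplus S$ has the finite $1$-generator property, yet it has a semisimple homogeneous factor (namely $A$ itself) whose width in the paper's sense is $2$. Thus Lemma~\ref{LeM:4} (whose proof deduces $yA_j=0$ from $ya_j=0$, and treats the annihilator of a simple module --- a two-sided primitive ideal with $R/Ann_R(S)\cong M_n(D)$ --- as if it were a maximal left ideal with $R/Ann_R(S)\cong S$), Corollary~\ref{CoR:5}, and Theorem~\ref{Th:1} as literally stated all fail in the non-multiplicity-free case. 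So the honest summary is: your argument is a sound proof of the corrected theorem, in which the width of $S^{(k)}$ is read as $\lceil k/a_S\rceil$; measured against the paper's own definition of width, it proves a different statement --- but no proof of the paper's literal statement exists, and you should make the definitional substitution and the counterexample explicit rather than leave them implicit.
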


\begin{theorem}\label{Th:2}
Let $n,r\in\mathbb{N}$ and  let  $FG$ be  the  semisimple group algebra of a finite group $G$  over  a field $F$ of prime characteristic  $p$. An  $FG$-module $A$  has  finite $r$-generator property  if and only if  the following conditions hold:
\begin{itemize}
\item[(i)]  $A = \oplus_{j=1}^n H_j$ in which each  $H_j = \oplus_{k=1}^{t(j)} D_k$, where each  $D_k$  is a simple  $FG$-submodule of  $A$, $|G|\leq t(j)$ and  $D_k \cong_{FG} D_m$  for all  $1 \leq k, m \leq t(j)$;
\item[(ii)]  $t(j) \leq r$  for all  $j$  and there exists   $s\in\mathbb{N}$  such that  $t(s) = r$;
\item[(iii)] $n \leq nns_F(G)$, where $nns_F(G)$  is the number of pairwise non-isomorphic simple  $FG$-modules.
\end{itemize}
\end{theorem}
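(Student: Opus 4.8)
The plan is to derive Theorem~\ref{Th:2} from Theorem~\ref{Th:1} by exploiting the fact that, in prime characteristic $p$ with $p\nmid|G|$, the algebra $FG$ is semisimple. First I would invoke Maschke's theorem to record that $FG$ is semisimple, so that every $FG$-module, and in particular $A$, is a direct sum of simple submodules. Both implications will then be obtained by translating the conditions of Theorem~\ref{Th:1} into the explicit semisimple language of (i)--(iii); the single genuinely new point is the lower bound $|G|\le t(j)$, which I expect to be the main obstacle.

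For the forward direction, assume $A$ has finite $r$-generator property. By Theorem~\ref{Th:1} we have $\dim_F(A)<\infty$ together with the width hypotheses. Finite dimension and semisimplicity force $A$ to be a finite direct sum of simple modules, and collecting the isomorphic summands gives $A=\oplus_{j=1}^{n}H_j$ with each $H_j=\oplus_{k=1}^{t(j)}D_k$ homogeneous. Since distinct $H_j$ involve pairwise non-isomorphic simple types and there are exactly $nns_F(G)$ such types, we immediately get $n\le nns_F(G)$, which is (iii). In the semisimple setting the semisimple homogeneous factors of $A$ are precisely the components $H_j$, so the next step is the computation that the width of a homogeneous module $D^{(t)}$ equals its multiplicity $t$: every nonzero element generates a submodule of composition length at least one, so any minimal generating subset has cardinality bounded by the composition length $t$, while choosing $t$ generators lying in $t$ independent simple summands realizes this bound. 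Feeding $\mathrm{width}(H_j)=t(j)$ into condition (ii) of Theorem~\ref{Th:1} yields $t(j)\le r$ for all $j$ and $t(s)=r$ for some $s$, which is exactly (ii).

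It remains to establish the lower bound $|G|\le t(j)$ in (i), and this is where I expect the real work to lie. The new feature of the semisimple prime-characteristic situation, absent in the split characteristic-zero case of \cite{Bovdi_Kurdachenko}, is that a single generator $x$ spans a cyclic submodule $FG\cdot x$ with $\dim_F(FG\cdot x)\le\dim_F(FG)=|G|$, so one element reaches only a bounded portion of a homogeneous component. I would combine this dimension bound with the Wedderburn description $FG\cong\prod_i M_{n_i}(\Delta_i)$, which records the multiplicity $n_i$ and the endomorphism division algebra $\Delta_i$ of each simple type, together with the fact that the least and greatest cardinalities of minimal generating subsets of $D^{(t)}$ are $\lceil t/n_i\rceil$ and $t$ respectively. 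The claim to be extracted is that compatibility of the $r$-generator property with these two bounds across all finitely generated submodules forces each present component to be large, namely $t(j)\ge|G|$; carrying out this count carefully, and checking that it is sharp, is the crux of the argument.

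Finally, the converse is comparatively routine. Assuming (i)--(iii), the module $A$ is a finite-dimensional semisimple $FG$-module whose semisimple homogeneous factors are the $H_j$, each of width $t(j)\le r$ with $t(s)=r$ for some $s$. Thus conditions (i) and (ii) of Theorem~\ref{Th:1} hold, and its sufficiency direction delivers the finite $r$-generator property for $A$, completing the proof.
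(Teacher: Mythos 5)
Your proposal is incomplete at precisely the point you yourself identify as ``the crux'': the inequality $|G|\le t(j)$ in condition (i) is never proved, only promised as the outcome of a Wedderburn-theoretic count. This deferral cannot be repaired, because the inequality is not a consequence of the $r$-generator property at all. Take $G=C_2$ and $F=\mathbb{F}_3$, so that $FG$ is semisimple, and let $A$ be the one-dimensional trivial module. Its only submodules are $\langle 0\rangle$ and $A$, so $A$ has the finite $1$-generator property, yet $A$ is a single homogeneous component with $t(1)=1<2=|G|$; the same happens for any simple module over any nontrivial $G$. So no counting argument of the kind you outline can force $t(j)\ge |G|$. For comparison, the paper's own proof of Theorem \ref{Th:2} never touches this inequality either: it records semisimplicity, the homogeneous decomposition, the bound $n\le nns_F(G)$ via the Berman--Witt theorem, and $t(j)\le r$ via Corollary \ref{Cor:6}, and stops (it also omits the converse, which you do handle via Theorem \ref{Th:1}). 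The condition $|G|\le t(j)$ thus appears to be an error in the statement rather than a provable assertion, and your instinct to treat it as the main remaining work leaves the proposal with a genuine, unfillable gap.

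A second problem is internal to your argument. You quote the correct fact that if $V$ is the simple module attached to the Wedderburn component $M_{n_i}(\Delta_i)$ of $FG$, then $d_{FG}(V^{\oplus t})=\lceil t/n_i\rceil$. This contradicts the width criterion you import from Theorem \ref{Th:1} (equivalently, Corollaries \ref{CoR:5} and \ref{Cor:6}), and also your sketched computation that a homogeneous module of multiplicity $t$ requires $t$ generators: exhibiting one minimal generating set of size $t$ does not bound $d_{FG}$ from below, since $d_{FG}$ is the \emph{least} cardinality of a minimal generating set, and minimal generating sets of a module need not have equal size. Concretely, for $G=S_3$ and $F=\mathbb{F}_7$ the two-dimensional simple module $V$ gives $V\oplus V\cong M_2(\mathbb{F}_7)$ as a left $FG$-module, so $V\oplus V$ and all of its submodules are cyclic; this module has the finite $1$-generator property while its homogeneous component has width $2$, violating condition (ii) with $r=1$. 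So whenever some $n_i>1$ the equivalence you are reducing to fails, and your own formula $\lceil t/n_i\rceil$ is the witness. In short, your derivation of (ii), (iii) and of the converse from Theorem \ref{Th:1} mirrors what the paper does directly with Corollary \ref{Cor:6}, but the step you flag as the crux is false rather than merely difficult, and the width claim that both you and the paper rely on is valid only when every $n_i=1$.
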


\begin{theorem}\label{Th:3}
Let $m,r\in\mathbb{N}$ and  let   $FG$ be  the modular group algebra of a finite group $G$ over  a field $F$ of prime characteristic  $p$.  Let $P\in Syl_p(G)$  be a normal Sylow  $p$-subgroup of  $G$. If  an  $FG$-module $A$  has finite $r$-generator property,  then  the following conditions hold:
\begin{itemize}
\item[(i)] $\dim_F(A)$ is  finite;
\item[(ii)] $A$   has a series of  $FG$-submodules
\[
\gp{0} = Z_0 \leq   Z_1 \leq  \cdots\leq  Z_{m-1} \leq  Z_m = A
\]
such that  $m \leq  |P|$  and each  $G/C_G(Z_{j + 1} /Z_j)$  is a   $p$-group,
each factor  $Z_{j + 1} /Z_j$   is a semisimple  $FG$-module, whose homogeneous components have width at most  $r$  and the number of homogeneous components is at most  $nns_F(G/P)$, where $nns_F(G)$  is the number of pairwise non-isomorphic simple  $FG$-modules.
\end{itemize}
\end{theorem}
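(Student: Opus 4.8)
The plan is to separate the two assertions. Statement (i) is immediate from Theorem~\ref{Th:1}: the finite $r$-generator property of $A$ already forces $\dim_F(A)<\infty$ and forces every semisimple homogeneous $FG$-factor of $A$ to have width at most $r$. Consequently all the remaining content of Theorem~\ref{Th:3} lies in exhibiting the series in (ii), and this is essentially a statement about the radical structure of the modular group algebra $FG$ applied to the now finite-dimensional module $A$.

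First I would recall the standard description of the radical in the present situation. Because $P$ is a normal Sylow $p$-subgroup, the augmentation ideal $\omega(FP)$ of $FP$ generates a nilpotent two-sided ideal $\omega(FP)\cdot FG$ of $FG$ with $FG/\omega(FP)FG \cong F[G/P]$; since $p\nmid |G/P|$, Maschke's theorem makes $F[G/P]$ semisimple, whence $J(FG)=\omega(FP)\cdot FG$. I would then bound its nilpotency index: as $\dim_F\omega(FP)=|P|-1$ and the chain $\omega(FP)\supsetneq \omega(FP)^2\supsetneq\cdots$ drops in dimension at each strict step, one obtains $\omega(FP)^{|P|}=0$, and hence $J(FG)^{|P|}=0$.

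With $J=J(FG)$ in hand, I would take the ascending annihilator filtration $Z_j=\{a\in A : aJ^j=0\}$ (equivalently the reversed radical series $Z_j=AJ^{m-j}$). Since $J$ is a two-sided ideal, each $Z_j$ is an $FG$-submodule, $Z_0=\gp{0}$, and $Z_m=A$ for the least $m$ with $AJ^m=0$; by the previous paragraph $m\le |P|$. Each factor satisfies $(Z_{j+1}/Z_j)J=0$, so it is a module over $FG/J\cong F[G/P]$ and is therefore a semisimple $FG$-module. In particular $(Z_{j+1}/Z_j)\omega(FP)=0$, i.e. $P$ acts trivially, so $P\le C_G(Z_{j+1}/Z_j)$ and the induced action factors through $G/P$. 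Decomposing each such factor into homogeneous components, the number of components is at most the number $nns_F(G/P)$ of isomorphism types of simple $F[G/P]$-modules; and since these homogeneous components arise as semisimple homogeneous $FG$-factors of $A$, Theorem~\ref{Th:1} bounds their width by $r$.

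I expect the main obstacle to be bookkeeping rather than a single deep step: one must verify that the annihilator filtration consists of honest $FG$-submodules whose successive factors are annihilated by all of $J$ (not merely by $\omega(FP)$), that the identification $FG/J\cong F[G/P]$ legitimately transfers the homogeneous-component count to $nns_F(G/P)$, and that the width bound of Theorem~\ref{Th:1} may be applied to the factors $Z_{j+1}/Z_j$ of this particular series. The genuinely algebra-specific input — the equality $J(FG)=\omega(FP)\cdot FG$ together with the dimension bound $\omega(FP)^{|P|}=0$ — is what pins the length of the series to $|P|$, and is the step I would treat most carefully.
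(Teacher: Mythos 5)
Your proof is correct, but it reaches the series in (ii) by a genuinely different route than the paper. The paper gets (i) from Lemma~\ref{LeM:5} and then invokes Lemma~\ref{LeM:9} --- a group-theoretic statement proved via Baumslag's theorem that $A\rtimes G$ is nilpotent, using the filtration $(g-1)^iA$ and induction on $|G|$ --- to produce a $P$-central series of length at most $|P|$, after which it applies Theorem~\ref{Th:2} to the factors. You instead work ring-theoretically: $J(FG)=\omega(FP)\cdot FG$, $J(FG)^{|P|}=0$, the filtration of $A$ by powers of $J(FG)$, Maschke's theorem for $F[G/P]$, and Theorem~\ref{Th:1}(ii) for the width bound. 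Your route buys two things. First, since $J(FG)$ is a two-sided ideal, it is automatic that the terms $Z_j$ are $FG$-submodules; in the paper this point is actually delicate, because Lemma~\ref{LeM:9} concerns a $p$-group $G$ and, applied to $P\le G$, a priori yields only $FP$-submodules, so your filtration clarifies (indeed repairs) that step. Second, your appeal to Theorem~\ref{Th:1}(ii) is legitimate --- each homogeneous component of $Z_{j+1}/Z_j$ has the form $V/Z_j$ with $Z_j\le V\le Z_{j+1}$ both $FG$-submodules of $A$, hence is a semisimple homogeneous $FG$-factor of $A$ --- and this bypasses Theorem~\ref{Th:2} altogether, which is more direct. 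The trade-off is that you import as external input the nilpotency of $\omega(FP)$ (equivalently $\omega(FP)=J(FP)$ for the $p$-group $P$ in characteristic $p$), whereas the paper's external input is Baumslag's nilpotency result; these are classical facts of comparable depth, but you should cite the nilpotency explicitly, since the dimension-drop argument by itself only shows that the chain of powers stabilizes, not that it terminates at zero. Finally, note that both your argument and the paper's actually establish $P\le C_G(Z_{j+1}/Z_j)$, so that $G/C_G(Z_{j+1}/Z_j)$ is a quotient of $G/P$, i.e. a $p'$-group; the phrase ``$p$-group'' in the statement (and in the paper's proof) is evidently a misprint, and your proof is consistent with what the paper really proves.
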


\section{Preliminaries and Lemmas}
In the sequel,  we use  the notions and results from the books \cite{Bovdi_book, Curtis_Reiner, DKS2017}.

\begin{lemma}\label{LeM:1}
Let $A$  be an  $R$-module over a ring $R$ such that   $A$ has  finite $r$-generator property. If  $B, C$  are $R$-submodules of $A$  such that $B\leq_R C$ then the following conditions hold:
\begin{itemize}
\item[(i)] the submodule  $B$ has finite $r$-generator property at most $r$;
\item[(ii)] the factor-module  $A/B$ has finite $r$-generator property at most $r$;
\item[(iii)] the factor-module $C/B$ has finite $r$-generator property at most $r$.
\end{itemize}

\end{lemma}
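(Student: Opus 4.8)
The plan is to reduce everything to the single usable consequence of the hypothesis, namely that $d_R(E)\leq r$ for every finitely generated $R$-submodule $E$ of $A$, and then to transfer this bound to submodules and to factor-modules (so that the conclusion in each part is precisely that every finitely generated submodule is generated by at most $r$ elements). I emphasise that $d_R(E)$ is an intrinsic invariant of the $R$-module $E$---the least cardinality of a generating set---so it is insensitive to the ambient module in which $E$ is considered. Two elementary observations power the argument: a finitely generated submodule of an $R$-submodule of $A$ is itself a finitely generated $R$-submodule of $A$; and if $\varphi\colon M\to N$ is a surjective homomorphism of $R$-modules, then the images under $\varphi$ of a generating set of $M$ generate $N$, whence $d_R(N)\leq d_R(M)$.

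For part (i), let $E$ be an arbitrary finitely generated $R$-submodule of $B$. Since $B\leq_R A$, the module $E$ is also a finitely generated $R$-submodule of $A$, and therefore $d_R(E)\leq r$ by hypothesis. As $E$ was arbitrary, every finitely generated submodule of $B$ is generated by at most $r$ elements, which is the required conclusion.

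For part (ii), a finitely generated $R$-submodule of $A/B$ has the shape $E/B$, where $B\leq_R E\leq_R A$ and $E/B$ is finitely generated. Choose $e_1,\dots,e_k\in E$ whose images modulo $B$ generate $E/B$, and let $E_0$ be the $R$-submodule of $A$ generated by $e_1,\dots,e_k$. Then $E_0$ is finitely generated and $E_0+B=E$, so the canonical homomorphism $E_0\to E/B$ is surjective. The second observation now gives $d_R(E/B)\leq d_R(E_0)\leq r$, the final inequality holding by hypothesis because $E_0$ is a finitely generated submodule of $A$. Thus every finitely generated submodule of $A/B$ is generated by at most $r$ elements.

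Finally, part (iii) follows by combining the previous two. The module $C/B$ is an $R$-submodule of $A/B$, and $A/B$ satisfies the bound established in (ii); running the argument of (i) with $A/B$ in the role of $A$ shows that every finitely generated submodule of $C/B$ is generated by at most $r$ elements. (One could equally regard $C/B$ as a factor-module of the submodule $C$ and apply the reasoning of (ii) to $C$, using (i) to know that $C$ inherits the bound.) I do not anticipate any genuine obstacle here; the only step that repays attention is the verification in (ii) that lifting a finite generating set of the quotient yields a finitely generated $E_0$ with $E_0+B=E$, which is exactly what makes the surjection---and hence the monotonicity of $d_R$---available.
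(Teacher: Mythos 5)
Your proof is correct, and it is exactly the standard verification that the paper suppresses: the paper's entire ``proof'' of this lemma is the sentence ``These assertions are obvious,'' and your three observations (a f.g.\ submodule of $B$ is a f.g.\ submodule of $A$; lifting generators of $E/B$ to a f.g.\ submodule $E_0$ with $E_0+B=E$ gives $d_R(E/B)\leq d_R(E_0)\leq r$; and $C/B\leq_R A/B$ reduces (iii) to (i) and (ii)) are precisely the intended argument. Your reading of the awkwardly phrased conclusion as the bound $d_R(E)\leq r$ for every f.g.\ submodule $E$ is also the right one, since the attainment clause (``exactly $r$'') clearly cannot be inherited by submodules or quotients.
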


These assertions are obvious.

\begin{lemma}\label{LeM:2}
Let $r,t\in\mathbb{N}$ and let $B$   be a submodule of an  $R$-module $A$ over a Noetherian ring $R$.  If     $B$ has  finite $r$-generator property at most $r$   and  $A/B$ has  finite $d$-generator property at most $t$, then   $A$ has  $d$-generator property  at most  $r+t$.
\end{lemma}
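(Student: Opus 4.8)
The plan is to verify the defining inequality directly: I would take an arbitrary finitely generated $R$-submodule $C$ of $A$ and show that $C$ can be generated by at most $r+t$ elements, so that $d_R(C)\le r+t$. The natural structure to exploit is the one induced by $B$, namely the pair consisting of the intersection $C\cap B$ (which sits inside $B$) and the image $(C+B)/B\cong C/(C\cap B)$ (which sits inside $A/B$). Bounding the number of generators of each of these two pieces separately and then recombining them should yield the bound $r+t$ for $C$ itself.

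First I would establish finite generation of the two pieces. The image $(C+B)/B$ is a homomorphic image of the finitely generated module $C$, hence is itself finitely generated; being a submodule of $A/B$, it therefore needs at most $t$ generators by hypothesis, say the images of $c_1,\dots,c_t$ under the canonical map $A\to A/B$. The intersection $C\cap B$ requires more care, and this is precisely where the Noetherian hypothesis is used: since $R$ is Noetherian and $C$ is finitely generated, $C$ is a Noetherian module, so its submodule $C\cap B$ is again finitely generated. As $C\cap B$ is then a finitely generated submodule of $B$, the hypothesis on $B$ supplies a generating set $e_1,\dots,e_r$ with at most $r$ elements.

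To finish I would verify that the combined set $\{c_1,\dots,c_t,e_1,\dots,e_r\}$ generates $C$. Given $x\in C$, its image in $A/B$ is an $R$-combination of the images of the $c_i$, so subtracting the corresponding $R$-combination of the $c_i$ themselves places $x$ inside $C\cap B$ (the difference lies in $C$ because all terms do, and in $B$ because its image in $A/B$ is zero); that difference is an $R$-combination of the $e_j$, and hence $x$ is an $R$-combination of the listed $r+t$ elements. This exhibits a generating set for $C$ of size at most $r+t$, so $d_R(C)\le r+t$, which is the asserted $d$-generator property at most $r+t$ for $A$.

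I expect the only genuine obstacle to be the finite generation of $C\cap B$: the lifting-and-recombining step is the standard short-exact-sequence computation and is purely formal, whereas without the Noetherian assumption the intersection $C\cap B$ could fail to be finitely generated, the hypothesis on $B$ would not apply to it, and the whole argument would break down.
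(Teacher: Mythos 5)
Your proposal is correct and takes essentially the same route as the paper's own proof: both pass to an arbitrary finitely generated submodule, invoke the Noetherian hypothesis to conclude that its intersection with $B$ is finitely generated (hence generated by at most $r$ elements), bound the generators of its image in $A/B$ by $t$, and combine the two generating sets. The only difference is cosmetic: you spell out the lifting verification that the combined set generates, which the paper leaves implicit.
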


\begin{proof}
Let  $D$  be a f.g.   $R$-submodule of $A$.  Clearly,  $D$ is Noetherian  as an f.g. module (see \cite[Lemma 1.1]{KOS2007}). The factor  $D/(D \cap  B) \cong  (D + B)/B$  has  finite generating subset  $\{ d_j + (D \cap  B)  \mid  1 \leq j \leq m \}$ in which   $m \leq t$. It follows that   $D \cap  B$  is a f.g.   $R$-submodule of  $B$. Since  $B$  has  $r$-generator property,  $D \cap  B$  has  finite generating subset $\{ b_j  \mid  1 \leq j \leq k \}$  where  $k \leq r$. The  subset    $\{ d_j  \mid  1 \leq j \leq m \}\cup \{ b_j  \mid  1 \leq j \leq k \}$ generates $D$  as an  $R$-submodule and   $m + k \leq t + r$.
\end{proof}

\begin{lemma}\label{LeM:3}
Let  $A =\oplus_{j=1}^{n} A_j$    be an  $R$-module over a ring $R$ in which  each   $A_j$  is a simple  $R$-submodule.  If elements of the set $\{ Ann_R^{left}(A_1),\ldots,  Ann_R^{left}(A_n)\}$ are pairwise non-isomorphic,  then  $A$  is a cyclic  $R$-module.
\end{lemma}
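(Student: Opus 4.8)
The plan is to exhibit an explicit generator, and the first thing I would do is pin down how the hypothesis is actually used. If $A_i \cong_R A_j$ for some $i \neq j$, then automatically $Ann_R^{left}(A_i) = Ann_R^{left}(A_j)$, so the assumption that the annihilators $Ann_R^{left}(A_1),\ldots,Ann_R^{left}(A_n)$ are pairwise non-isomorphic forces the simple submodules $A_1,\ldots,A_n$ themselves to be pairwise non-isomorphic. This is the property I will exploit. Since $A=\oplus_{j=1}^n A_j$ is semisimple, the Jordan--H\"older theorem then says that every isomorphism type of simple module occurs \emph{exactly once} among the composition factors of $A$; this multiplicity-one fact is the backbone of the argument.

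Next, for each $j$ I would choose a nonzero element $a_j \in A_j$ and set $a = a_1 + \cdots + a_n$. Because $A_j$ is simple and $a_j \neq 0$, the cyclic submodule $a_j R$ equals $A_j$. My claim is that the cyclic submodule $D := aR$ is all of $A$. Let $\pi_j \colon A \to A_j$ be the canonical projection attached to the given decomposition. Each $\pi_j$ is an $R$-homomorphism, so
\[
\pi_j(D) = \pi_j(aR) = \pi_j(a)\,R = a_j R = A_j,
\]
that is, $D$ maps onto every $A_j$; in particular each $A_j$ is an epimorphic image of $D$.

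The step I expect to be the crux is upgrading ``$D$ surjects onto each summand'' to ``$D = A$''. Since $A$ is semisimple, its submodule $D$ is again semisimple and is a direct summand, say $A = D \oplus E$. Suppose $E \neq 0$; then $E$ contains a simple submodule $S$, and as the only simple composition factors of $A$ are the $A_j$, we have $S \cong_R A_k$ for some $k$. On the other hand $A_k$ is an epimorphic image of the semisimple module $D$, hence a direct summand of $D$, so $A_k$ also appears as a composition factor of $D$. Then $A = D \oplus E$ would have at least two composition factors isomorphic to $A_k$, contradicting the multiplicity-one statement established above. Hence $E = 0$, $D = A$, and $A = aR$ is cyclic. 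The essential point---and exactly where pairwise non-isomorphism is indispensable---is this multiplicity count; the conclusion fails the moment a simple type is repeated, which is precisely the situation that Theorem~\ref{Th:2} must accommodate.
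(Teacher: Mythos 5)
Your proof is correct, and it takes a genuinely different route from the paper's. Both arguments propose the same generator $a=a_1+\cdots+a_n$ (one nonzero element from each summand), but the verifications diverge completely. The paper works directly with the annihilators: setting $U_j=Ann_R^{left}(A_j)$, it argues that distinctness gives $U_i+U_m=R$, picks $\alpha\in U_2\setminus U_1$ with $\alpha a_1\neq 0$ and $\alpha a_2=0$, and iterates such choices to strip components off $\alpha(a_1+\cdots+a_n)$ until $A_1\leq \langle a\rangle_R$, doing the same for each $A_j$. You instead discard the annihilators at the outset: distinct annihilators force the $A_j$ to be pairwise non-isomorphic, and from there you argue structurally --- $\langle a\rangle_R$ is a semisimple direct summand of $A$ surjecting onto every $A_j$, so a nonzero complement would produce a repeated composition factor, contradicting the Jordan--H\"older multiplicity count. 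Your route buys two things. First, it proves a formally stronger statement: you only use pairwise non-isomorphism of the $A_j$, which is implied by, but strictly weaker than, distinctness of annihilators (non-isomorphic simple modules can share an annihilator, e.g.\ two non-isomorphic faithful simple modules over a primitive ring). Second, it is more robust: the paper's computation leans on the claim that each $U_j$ is a \emph{maximal left ideal}, but the annihilator of a simple module is a two-sided primitive ideal and need not be maximal as a left ideal (the natural simple module over $M_2(F)$ has annihilator zero), so the step $U_i+U_m=R$ requires extra care over general rings, whereas your multiplicity argument uses only the standard semisimplicity and Krull--Remak--Schmidt/Jordan--H\"older facts the paper itself quotes elsewhere. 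What the paper's approach buys in exchange is elementarity: it is self-contained element-pushing with no appeal to composition series. A cosmetic point: you write the action on the right ($aR$) while the paper uses left modules and left annihilators; nothing in your argument depends on the side.
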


\begin{proof} Let us prove that $A=\gp{a_1 + \cdots+ a_n \mid 0\not= a_j\in A_j }_R$. Since  each $A_j$  is a simple   $R$-module,  $U_j: = Ann_R^{left}(A_j)$  is a maximal left ideal of  $R$. The fact that a sum of left ideals is also a left ideal together with  $U_j \not= U_m $ implies that  $U_j + U_m = R$  for all  $j \not= m$. Thus  $U_1 + U_2 = R$ and  there exists   $\alpha  \in  U_2\setminus   U_1$ such that    $\alpha a_1 \not= 0$,  $\alpha a_2 = 0$  and
\[
\alpha (a_1 + \cdots+ a_n) =  \alpha a_1 + (\alpha a_3 + \cdots+ \alpha a_n).
\]
If  $\alpha a_3 + \cdots+ \alpha a_n = 0$, then  $\alpha a_1 \in  R(a_1 + \cdots+ a_n)$.  Since  $A_1$  is a simple  $R$-module, it  can be  generated by each of  its non-zero elements, so $A_1 = \gp{a_1}_R \leq \gp{a_1 + \cdots+ a_n}_R$.

Let  $\alpha a_3 + \cdots+ \alpha a_n \not= 0$ and  let  $j_1\in \mathbb{N}$  be a smallest integer (say  $j_1 = 3$)  such that  $j_1 > 2$  and  $\alpha A_{j_1} \not= 0$. The  equality $U_1 + U_3 = R$ shows that  there exists     $\beta \in  U_3\setminus   U_1$ such that   $\beta\alpha a_1 \not= 0$,  $\beta\alpha a_3 = 0$  and
\[
\beta(\alpha a_1 + \alpha a_3 + \cdots+ \alpha an) = \beta\alpha a_1 + (\beta\alpha a_4 + \cdots+ \beta\alpha a_n).
\]
If  $\beta\alpha a_4 + \cdots+ \beta\alpha a_n = 0$, then,  as above, $A_1 \leq \gp{a_1 + \cdots+ a_n}_R$.  Otherwise, all this process we can repeat again. Since $n\in\mathbb{N}$,   after finitely many steps we obtain that  $A_1 \leq R(a_1 + \cdots+ a_n)$ and $A = A_1 \oplus  \cdots\oplus  A_n = \gp{a_1 + \cdots+ a_n}_R$.
 \end{proof}

\begin{lemma}\label{LeM:4}
Let  $A =\oplus_{j=1}^{n} A_j $  be an  $R$-module over a ring  $R$ in which    each $A_j$  is a simple  $R$-submodule. If all elements of the set $\{ Ann_R^{left}(A_1),\ldots,  Ann_R^{left}(A_n)\}$ coincide,   then  $Ann_R^{left}(A_j)=Ann_R^{left}(\gp{a}_R)$ for each  $a \in  A\setminus\{0\}$. In particular, the  $R$ -submodule $\gp{a}$ is isomorphic to each  $A_j$ for  $1 \leq j \leq n$.
\end{lemma}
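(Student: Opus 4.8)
The plan is to isolate the common annihilator and then let its maximality do all the work. Put $U := Ann_R^{left}(A_1) = \cdots = Ann_R^{left}(A_n)$ for the common value. Exactly as in the proof of Lemma~\ref{LeM:3}, the simplicity of each $A_j$ guarantees that $U$ is a maximal left ideal of $R$ and that $A_j \cong R/U$; moreover $U$ annihilates each summand $A_j$, so from $A = \oplus_{j=1}^{n} A_j$ we obtain $UA = 0$. These two features of $U$ are all I intend to use.

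Next I would fix a nonzero element $a \in A$ and analyse the left ideal $Ann_R^{left}(\langle a\rangle_R)$. First I would record the inclusion $U \subseteq Ann_R^{left}(\langle a\rangle_R)$: since $\langle a\rangle_R \subseteq A$ and $UA = 0$, every element of $U$ kills $\langle a\rangle_R$. On the other hand $Ann_R^{left}(\langle a\rangle_R)$ is a proper left ideal, because $a \neq 0$ forces $1 \notin Ann_R^{left}(\langle a\rangle_R)$. Thus $U$ is a maximal left ideal lying inside the proper left ideal $Ann_R^{left}(\langle a\rangle_R)$, and maximality forces $Ann_R^{left}(\langle a\rangle_R) = U = Ann_R^{left}(A_j)$ for every $j$, which is the first assertion.

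For the final claim I would use the canonical surjection $R \to \langle a\rangle_R$, $r \mapsto ra$, whose kernel $\{r \in R : ra = 0\}$ is a proper left ideal containing $U$ and hence equals $U$ by maximality; therefore $\langle a\rangle_R \cong R/U \cong A_j$ for each $j$, and in particular $\langle a\rangle_R$ is simple. The only step carrying genuine content is the equality $UA = 0$ combined with the fact that $U$ is a maximal left ideal: once these are in hand the rest is purely formal, driven entirely by maximality. I therefore expect the verification that the common value $U$ annihilates all of $A$ (and not merely a prescribed generator of each $A_j$) to be the one point requiring care, and it is exactly what the hypothesis that the $Ann_R^{left}(A_j)$ coincide is designed to supply.
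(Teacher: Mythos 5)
Your proposal takes essentially the same route as the paper's own proof: set $U$ equal to the common annihilator (the paper's $L$), observe that $UA=\gp{0}$, and use the maximality of $U$ as a left ideal to force every proper left ideal containing it --- both $Ann_R^{left}(\gp{a}_R)$ and the kernel of $r\mapsto ra$ --- to equal $U$, whence $\gp{a}_R\cong R/U\cong A_j$. Your treatment of properness ($1\notin Ann_R^{left}(\gp{a}_R)$ because $a\neq 0$) is in fact cleaner than the paper's, which instead supposes $Ann_R^{left}(a)=R$ and derives a contradiction componentwise from the directness of the sum.

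One caution, however, about where you locate the content of the argument. You say the crux is the equality $UA=0$; in fact that step is immediate, since by hypothesis $U$ annihilates every summand $A_j$ and hence their sum. The genuinely load-bearing step --- in your write-up and in the paper's alike --- is the claim borrowed from the proof of Lemma \ref{LeM:3} that $Ann_R^{left}(A_j)$, the annihilator of the \emph{whole} simple module, is a maximal left ideal. That is true for the annihilator of a single nonzero element (since $R/Ann_R^{left}(a_j)\cong \gp{a_j}_R=A_j$ is simple), but for the module annihilator it can fail over noncommutative rings: for $R=M_2(F)$, or $R=FG$ with $F$ algebraically closed of characteristic prime to $|G|$ and $G$ nonabelian, a simple module $V$ with $\dim_F(V)=2$ has $Ann_R^{left}(V)$ a two-sided ideal that is not maximal as a left ideal, and for $A=V\oplus V$ the element $a=(v_1,v_2)$ with $v_1,v_2$ linearly independent generates all of $A$, so $\gp{a}_R\not\cong V$ and the final assertion of the lemma fails. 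Thus your proof is valid exactly in the situations where the paper's is (for instance $R$ commutative, or more generally whenever $R/Ann_R^{left}(A_j)$ is a division ring); since you reproduce the paper's argument faithfully, gap included, this should be recorded as a weakness shared with the source rather than an error peculiar to your proposal.
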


\begin{proof}
Set $L:=Ann_R^{left}(A_1)$. Clearly,   $L \leq  Ann_R^{left}(a)$  for each $0\not=a \in  A$, so that  $L \leq  Ann_R^{left}(A)$. Since  $A_1$  is a simple  $R$-module, $L$  is a maximal left ideal of  $R$. It follows that  $L = Ann_R^{left}(A)$.

Let $y\in R\setminus L$ and let    $a=a_1 + \cdots+ a_n\in A\setminus\{0\}$,    in which   each $a_j \in  A_j$,   $a_1 \not= 0$ and  $Ann_R^{left}(a) = R$. Since $ya=0$ and $a_1,  \ldots, a_n$ are linearly independent over $R$, $yA_j = 0$  for each $j$. In particular,    $ya_1 = 0$   and  $y \in  Ann_R^{left}(A_1) = L$,  a contradiction. It follows that    $Ann_R^{left}(a) \not= R$  for each  $a \in  R\setminus\{0\}$. Consequently,   $Ann_R^{left}(a) = L$ and   $\gp{b}_R \cong_R R/L$  for each $b \in  R\setminus\{0\}$. \end{proof}

\begin{corollary}\label{CoR:4}
Let $\Lambda$ be a  set and let  $A =\oplus_{\lambda \in  \Lambda} A_\lambda$   be an  $R$-module over a ring $R$ in which  each
$A_\lambda$ is a simple  $R$-submodule.  If
\[
Ann_R^{left}(A_\lambda) = Ann_R^{left}(A\mu), \qquad (\lambda, \mu  \in \Lambda)
\]
then each  cyclic  $R$-submodule of  $A$  is simple and   $A_\lambda\cong_R \gp{a}_R$ for each $a \in  A\setminus \{0\}$.
\end{corollary}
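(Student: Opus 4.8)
The plan is to reduce the arbitrary direct sum to the finite situation already settled in Lemma~\ref{LeM:4}, exploiting the fact that every element of a direct sum has finite support. Write $L := Ann_R^{left}(A_\lambda)$ for the common value of the left annihilators. Since each $A_\lambda$ is simple, $L$ is a maximal left ideal of $R$, and for a nonzero $a_\lambda \in A_\lambda$ the map $r \mapsto r a_\lambda$ yields $A_\lambda = \gp{a_\lambda}_R \cong_R R/L$; in particular all the $A_\lambda$ are mutually isomorphic as $R$-modules.

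First I would fix an arbitrary nonzero element $a \in A$ and use the definition of the external direct sum to find a finite subset $\{\lambda_1, \ldots, \lambda_n\} \subseteq \Lambda$ with $a \in A_0 := A_{\lambda_1} \oplus \cdots \oplus A_{\lambda_n}$. The finite direct sum $A_0$ is again a direct sum of simple $R$-submodules whose left annihilators all coincide (they are all equal to $L$), so $A_0$ meets the hypotheses of Lemma~\ref{LeM:4}. Applying that lemma to $A_0$ and to the nonzero element $a$ gives $Ann_R^{left}(\gp{a}_R) = L$ together with $\gp{a}_R \cong_R A_{\lambda_1}$.

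It then follows immediately that $\gp{a}_R \cong_R R/L$ is simple, since $L$ is maximal, and that $\gp{a}_R \cong_R A_\lambda$ for every $\lambda \in \Lambda$ by the isomorphisms recorded in the first paragraph. As $a$ was an arbitrary nonzero element, and every cyclic submodule of $A$ has the form $\gp{a}_R$, both assertions of the corollary follow at once.

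There is essentially no deep obstacle in this argument: the whole content is the finite-support reduction, and the single point deserving a moment's care is the verification that the chosen finite sub-sum $A_0$ genuinely inherits the hypothesis of Lemma~\ref{LeM:4} — namely that the relevant left annihilators coincide — which is automatic here because they are all equal to the one ideal $L$ by assumption.
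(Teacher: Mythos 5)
Your proof is correct and follows exactly the route the paper intends: Corollary~\ref{CoR:4} is stated there without a separate argument, as an immediate consequence of Lemma~\ref{LeM:4}, and your finite-support reduction (passing from an arbitrary nonzero $a\in A$ to the finite sub-sum $A_0=A_{\lambda_1}\oplus\cdots\oplus A_{\lambda_n}$ containing it, then applying Lemma~\ref{LeM:4} to $A_0$) is precisely the derivation the paper leaves implicit. The only point worth noting is that simplicity of $\gp{a}_R$ already follows directly from the isomorphism $\gp{a}_R\cong_R A_{\lambda_1}$ supplied by Lemma~\ref{LeM:4}, so the detour through the maximality of $L$ is not needed.
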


\begin{corollary}\label{CoR:5}
Let $n, r\in \mathbb{N}$ and let $A =\oplus_{j=1}^{n} A_j $  be an  $R$-module over   a ring $R$  in which    each  $A_j$  is a simple $R$-submodule.

If all elements of the set $\{ Ann_R^{left}(A_1),\ldots,  Ann_R^{left}(A_n)\}$  coincide,  then the following conditions are equivalent:
\begin{itemize}
\item[(i)] $d_R(A) = r$;
\item[(ii)]   $A$  has  finite $r$-generator property;
\item[(iii)]  $n=r$, where $n={\rm width}_R(A)$.
\end{itemize}
\end{corollary}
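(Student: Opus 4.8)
The plan is to use Corollary \ref{CoR:4} to collapse the whole problem onto a single numerical invariant, the width. First I would note that, because all the left annihilators $Ann_R^{left}(A_j)$ coincide, Corollary \ref{CoR:4} applies and tells us that every nonzero cyclic $R$-submodule $\langle a\rangle_R$ of $A$ is simple and isomorphic to each $A_j$. Hence $A$ is a semisimple homogeneous module; being a finite direct sum of simple submodules it has finite width ${\rm width}_R(A)=n$ and finite composition length, so $A$ and all of its submodules are Noetherian and every submodule of $A$ is finitely generated.

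The central step is to establish that $d_R(B)={\rm width}_R(B)$ for every submodule $B\le_R A$. If $\{x_1,\ldots,x_m\}$ generates $B$, then $B=\langle x_1\rangle_R+\cdots+\langle x_m\rangle_R$ is a sum of $m$ simple submodules (each cyclic summand is simple by the previous paragraph), whence ${\rm width}_R(B)\le m$; in particular $d_R(B)\ge {\rm width}_R(B)$. Conversely, picking one nonzero element from each simple summand of a fixed decomposition of $B$ yields a generating set of exactly ${\rm width}_R(B)$ elements, so $d_R(B)\le {\rm width}_R(B)$. Taking $B=A$ gives $d_R(A)={\rm width}_R(A)=n$, and running through the partial sums $A_1\oplus\cdots\oplus A_k$ shows that the values $d_R(B)$, as $B$ ranges over the (finitely generated) submodules of $A$, fill out exactly $\{0,1,\ldots,n\}$.

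These two facts make the equivalences immediate. For (i)$\Leftrightarrow$(iii) I simply read off $d_R(A)=r \iff n=r$ from $d_R(A)=n={\rm width}_R(A)$. For (ii)$\Leftrightarrow$(iii) I unwind the definition: $A$ has finite $r$-generator property precisely when $d_R(B)\le r$ for every finitely generated submodule $B$ and some submodule $D$ attains $d_R(D)=r$. Since the attained values are exactly $\{0,\ldots,n\}$ with maximum $n$, the first condition forces $r\ge n$ and the second forces $r\le n$, so together they hold if and only if $r=n={\rm width}_R(A)$.

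I expect the only subtle point to be the lower bound $d_R(B)\ge {\rm width}_R(B)$ — the statement that a homogeneous semisimple module of width $n$ cannot be generated by fewer than $n$ elements. This is where the homogeneity hypothesis is indispensable: it enters through Corollary \ref{CoR:4} in the guise of the fact that every cyclic submodule has width one, so no single generator can cover more than one simple constituent. This is in sharp contrast with the situation of Lemma \ref{LeM:3}, where distinct annihilators allow a single element to generate the entire sum.
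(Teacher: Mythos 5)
Your proposal is correct and follows essentially the same route as the paper: both rest on the fact that homogeneity makes every nonzero cyclic submodule simple (Lemma \ref{LeM:4}/Corollary \ref{CoR:4}), combined with semisimplicity of submodules and the Krull--Remak--Schmidt theorem to identify $d_R(B)$ with ${\rm width}_R(B)$. The only difference is organizational: you prove the identity $d_R(B)={\rm width}_R(B)$ uniformly for all submodules and read off both equivalences at once, whereas the paper first treats $A$ itself via a minimal generating set and then handles arbitrary finitely generated submodules through a semisimple complement $A=B\oplus C$; in particular your ``attained values are exactly $\{0,\dots,n\}$'' observation makes the step (ii)$\Rightarrow$(iii), which the paper dispatches rather tersely, fully explicit.
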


\begin{proof} $(i)\Leftrightarrow (iii)$.
Let  $S:= \{ b_1, \ldots, b_k \}\subseteq A$  be a minimal set,  such that   $A=\gp{S}_R$  and let $B_j:=\gp{b_j}_R$.  Obviously,   $B_j\cong_R  A_j$ after a suitable renumbering of the set $\{A_k\}$ and $B_j$ is simple  for each $j$ by  Lemma  \ref{LeM:4}.
Since  $A$  is a semisimple  $R$-module,  $A$  includes an  $R$-submodule  $D_1$  such that  $A = B_1 \oplus  D_1$
(see \cite[Corollary 4.3]{KOS2007}).  If   $b_t\in  B_1$ for some $t>1$, then  $A=\gp{S\setminus  b_t}_R$,  a contradiction. This yields  that  $b_t \not\in  B_1$. Similarly, if $t>2$ then $b_t \not\in  B_1\oplus B_2$. Repeating this process finitely many times, we obtain $b_k \not\in  B_1\oplus\cdots \oplus  B_{k-1}$ and   $A = \oplus_{j=1}^k   B_j$.

If  $T:=\{ d_1, \ldots, d_m \}\subseteq A$,  such that $S\not=T$ and $A=\gp{T}_R$, then repeating the above arguments, we obtain a  decomposition  $A = \oplus_{j=1}^m D_j$ in which   each $D_j=\gp{d_j}_R\cong_R A_j$ after a suitable renumbering of the set $\{A_k\}$. It follows that   $k = m$ by  the Krull-Remak-Schmidt theorem (see \cite[Chapter 6, Theorem 1.6]{CP1991})  and   $d_R(A) = n$.

$(ii)\Leftrightarrow (iii)$. Since  $A$  is a f.g. $R$-module,  $d_R(A) = r$.  Thus  $n = r$ by the part above.

Conversely, let  $n = r$. Using part (i)  we obtain that  $d_R(A) = r$. Let  B  be a  f.g.   $R$-submodule of  A. Since  $A$  is a semisimple   $R$-module, there exists  an  $R$-submodule  C  such that  $A=B \oplus C$ (see \cite[Corollary 4.3]{KOS2007}). Since   $B, C$  are semisimple,  $B = \oplus _{j=1}^s  B_j$ and  $C = \oplus_{j=1}^t  C_j$, where each $B_j$ and $C_j$  are simple  $R$-submodules (see \cite[Corollary 4.3]{KOS2007}). Consequently, using the Krull-Remak-Schmidt theorem, we get
\[
A = (\oplus_{j=1}^s B_j) \oplus  (\oplus_{j=1}^t  C_j),
\]
in which $s + t = r$ and  $B_j \cong_R A_1$, so $d_R(B) = s \leq  r$ by  part (i).  \end{proof}

Let $A =\oplus_{j=1}^{n} A_j $ be an  $R$-module over a ring $R$ in which    each $A_j$  is a simple  $R$-module. The relation  "$A_j \cong_R A_m$"  is an equivalence relation on the set  $\{ A_j  \mid 1 \leq j \leq n \}$. Let  $E_1, \ldots, E_k$  be all equivalence classes by its relation.   A direct sum of all submodules from the  set  $E_j$ we denote by  $S_j$ and it is called a {\it homogeneous component of  A.}

\begin{corollary}\label{Cor:6} Let $n, r\in \mathbb{N}$ and let $A =\oplus_{j=1}^{n} A_j $  be a semisimple  $R$-module over a ring $R$  in which   each  $A_j$  is a simple  $R$-module. The following conditions are equivalent:
\begin{itemize}
\item[(i)]  $d_R(A) = r$;

\item[(ii)] $A$  has  $r$-generator property;

\item[(iii)]  all homogeneous components of  $A$  have  width at most  $r$  and there exists a homogeneous component, having width exactly  $r$.
\end{itemize}
\end{corollary}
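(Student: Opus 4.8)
The plan is to reduce the three equivalences to the single numerical identity
\[
d_R(A) = \max_{1\le i\le k}{\rm width}_R(S_i),
\]
where $S_1,\dots,S_k$ denote the homogeneous components of $A$ and ${\rm width}_R(S_i)$ is the number of simple summands of $S_i$. Since $A$ is a finite direct sum of simple modules it is finitely generated, so $d_R(A)$ and the $r$-generator property both make sense, and $A$ itself is a finitely generated submodule witnessing $d_R(A)$. Condition $(iii)$ says precisely that $\max_i{\rm width}_R(S_i)=r$, so once the identity is established the equivalence $(i)\Leftrightarrow(iii)$ is immediate.

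To prove the identity, write $w_i:={\rm width}_R(S_i)$ and $w:=\max_i w_i$. For the lower bound $d_R(A)\ge w$, choose a component $S_{i_0}$ with $w_{i_0}=w$. The projection of $A$ onto $S_{i_0}$ along $\bigoplus_{i\ne i_0}S_i$ realises $S_{i_0}$ as a factor-module of $A$; since the image of any generating set is a generating set, $d_R$ does not increase under factor-modules, so $d_R(A)\ge d_R(S_{i_0})$, and $d_R(S_{i_0})=w_{i_0}=w$ by Corollary \ref{CoR:5} applied to the homogeneous module $S_{i_0}$.

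For the upper bound $d_R(A)\le w$ — which I expect to be the crux — I would build $w$ generators for $A$ by a Chinese--Remainder type amalgamation. Using Corollary \ref{CoR:5}, pick for each $i$ a minimal generating set $\{s_{i,1},\dots,s_{i,w_i}\}$ of $S_i$ and pad it with zeros to length $w$. Set $g_l:=\sum_{i=1}^{k}s_{i,l}$ for $1\le l\le w$. Fix $l$ and discard the zero summands: by Corollary \ref{CoR:4} each nonzero $s_{i,l}$ generates a \emph{simple} submodule of the homogeneous component $S_i$, and these simple submodules lie in distinct components, so their annihilators are pairwise distinct. Hence Lemma \ref{LeM:3} applies to $g_l$ and yields $\langle g_l\rangle_R=\bigoplus_i\langle s_{i,l}\rangle_R$, so each $s_{i,l}\in\langle g_l\rangle_R$. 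Running over all $l$ shows that $\langle g_1,\dots,g_w\rangle_R$ contains every $s_{i,l}$, hence contains each $S_i=\langle s_{i,1},\dots,s_{i,w_i}\rangle_R$, and therefore equals $A$. Thus $d_R(A)\le w$, completing the identity.

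Finally, for $(i)\Leftrightarrow(ii)$ I would use monotonicity on submodules. If $B\le A$ then, $A$ being semisimple, $B$ is again semisimple with homogeneous components of width at most the corresponding widths in $A$; by the identity this gives $d_R(B)\le d_R(A)$. Consequently the supremum of $d_R(B)$ over all finitely generated submodules $B$ equals $d_R(A)$ and is attained at $B=A$. Hence $A$ has the $r$-generator property exactly when $d_R(A)=r$, which is $(i)$. The one delicate point throughout is the amalgamation step, where one must verify that the cyclic pieces coming from different homogeneous components are simple (Corollary \ref{CoR:4}) and carry distinct annihilators before Lemma \ref{LeM:3} can separate them again.
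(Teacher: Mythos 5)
Your proof follows essentially the same route as the paper's: the bound $\mathrm{width}_R(S_i)\le d_R(A)$ comes from passing to the factor-module $A/\bigl(\oplus_{j\ne i}S_j\bigr)\cong S_i$ and applying Corollary \ref{CoR:5} (the paper invokes Lemma \ref{LeM:1}(ii) for the same purpose), the reverse bound comes from amalgamating diagonal generators across the homogeneous components, and the equivalence of (i) with (ii) uses monotonicity of $d_R$ on submodules of a semisimple module (the paper gets this from a complement plus the Krull--Remak--Schmidt theorem). Your write-up of the amalgamation is in fact cleaner than the paper's, whose text at that point is garbled: it writes $E_1=\oplus_{j=1}^{s}C_j$ where a decomposition of all of $A$ into $s$ cyclic ``columns'' is meant, cites Lemma \ref{LeM:4} where Lemma \ref{LeM:3} is what makes each column cyclic, and interchanges the two inequalities between $r$ and $s$ at the end.

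There is, however, one inference in your argument that fails at the stated level of generality, namely that simple submodules lying in distinct homogeneous components ``have pairwise distinct annihilators''. Distinctness of annihilators is precisely the hypothesis of Lemma \ref{LeM:3}, but it does not follow from non-isomorphism over an arbitrary ring: over the first Weyl algebra $R=A_1(k)$ with $\mathrm{char}(k)=0$, which is a simple ring, every nonzero module is faithful, so the two non-isomorphic simple modules $R/Rx$ and $R/R\partial$ share the annihilator $\{0\}$. Two remarks put this in perspective. First, the fact you actually need --- that a finite direct sum of pairwise non-isomorphic simple modules is generated by any element all of whose components are nonzero --- is true over every ring and has a short direct proof: every submodule of such a sum is a partial sum of the $S_i$'s (the $S_i$-homogeneous component of any submodule lies inside $S_i$), and a partial sum containing an element with all components nonzero must be the whole sum; substituting this argument for the appeal to Lemma \ref{LeM:3} closes the gap. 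Second, for the rings to which this corollary is actually applied in the paper ($R=FG$ with $G$ finite, an artinian algebra) non-isomorphic simple modules do have distinct annihilators, so the step is harmless there; and the paper's own proof rests tacitly on the same unproved implication, so this defect is inherited from the paper rather than introduced by you.
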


\begin{proof} $(i)\Leftrightarrow (iii)$. Let $d_R(A) = r$.  Let $\{E_1, \ldots, E_k\}$  be   a collection of  all homogeneous components of  $A$. Set $D_m = \oplus_{j=1, j \not= m}^n E_j$. Clearly,  $A =\oplus_{j=1}^k  E_j$ and  $A/D_m\cong_K E_m$ is a semisimple homogeneous module, so
\[
d_R(A/D_m) \leq  d_R(A) = r, \qquad (1\leq m\leq n)
\]
by Lemma \ref{LeM:1}(ii). Since $d_R(E_m)={\rm width}_R(E_m)$  by Corollary \ref{CoR:5}(i)(iii),
\[
{\rm width}_R(E_m)\leq r.
\]
Without lost  of generality we can  assume that $E_1$ has a greatest width,  say  \newline  ${\rm width}_R(E_1)=s$ with $s\in \mathbb{N}$.
Furthermore,  $E_1 = \oplus_{j=1}^s  C_j$ in which   each $C_j$  is a simple  $R$-submodule whose homogeneous components have  width  $1$ by Corollary  \ref{CoR:5}(iii).  Each  module  $C_j$  is cyclic by Lemma \ref{LeM:4}, so $A$  has a set of generators consisting of  $s$  elements. Hence  $s \leq  r$. The above proved inequality $r \leq  s$   implies that  $r = s$.
Using similar arguments,  the conditions of our lemma are sufficient.

$(ii)\Leftrightarrow (iii)$. Since  $A$  is a f.g. $R$-module, $d_R(A) = r$, so  we can repeat the proof of part $(i)\Rightarrow (iii)$.

Conversely, $(iii)\Rightarrow (i)$ as was proved  before, so   $d_R(A) = r$. Let  $B\leq_R A$  be a f.g.   $R$-submodule. Since  $A$  is a semisimple   $R$-module, there exists  an  $R$-submodule  $C$  such that  $A = B \oplus C$        (see \cite[Corollary 4.3]{KOS2007}) with semisimple components   $B$ and $C$  (see \cite[Corollary 4.3]{KOS2007}). Thus
\[
A =B \oplus C= (\oplus_{j=1}^s B_j) \oplus  (\oplus_{j=1}^t  C_j),
\]
where each $B_j$ and $C_j $  are simple  $R$-submodules. Finally, according to  the Krull-Remak-Schmidt theorem  (see \cite[Chapter 6, Theorem 1.6]{CP1991}),  each   homogeneous component of  $B$  has  width at most  $r$, so $d_R(B) \leq  r$ by  part (iii).\end{proof}

\begin{lemma}\label{LeM:5}
Let   $A$  be an  $FG$-module, where  $FG$ is    the group algebra of a finite group $G$ over a field $F$.
If  $A$  has finite $r$-generator property, then   $\dim_F(A) \leq  r|G|$.
\end{lemma}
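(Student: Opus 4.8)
The plan is to reduce everything to a dimension count on finitely generated submodules, exploiting the fact that $G$ is finite so that $FG$ itself is finite-dimensional over $F$, namely $\dim_F(FG) = |G|$. The first step is to observe that a cyclic $FG$-submodule is small: for any $a \in A$ the cyclic submodule $\gp{a}_{FG} = a \cdot FG$ is a homomorphic image of the regular module $FG$, hence
\[
\dim_F\bigl(\gp{a}_{FG}\bigr) \leq \dim_F(FG) = |G|.
\]
By additivity of dimension over a generating system, it follows at once that any $FG$-submodule generated by $s$ elements has $F$-dimension at most $s|G|$.

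Next I would invoke the $r$-generator hypothesis. If $B$ is an arbitrary finitely generated $FG$-submodule of $A$, then $d_{FG}(B) \leq r$, so $B$ admits a generating set of at most $r$ elements. Combining this with the previous step gives the uniform bound $\dim_F(B) \leq r|G|$ for \emph{every} finitely generated $FG$-submodule $B$ of $A$.

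Finally I would pass from finitely generated submodules to $A$ itself by a linear-independence argument, which avoids having to establish in advance that $A$ is finitely generated. Suppose, for contradiction, that $\dim_F(A) > r|G|$. Then one can choose $r|G| + 1$ elements $a_1, \dots, a_{r|G|+1} \in A$ that are linearly independent over $F$. The submodule $B := \gp{a_1, \dots, a_{r|G|+1}}_{FG}$ is finitely generated and contains these $r|G|+1$ independent vectors, so $\dim_F(B) \geq r|G| + 1 > r|G|$, contradicting the uniform bound just established. Hence $\dim_F(A) \leq r|G|$, as claimed. There is no serious obstacle here: the argument is essentially a counting estimate, and the only point deserving care is to run the final contradiction through a concrete finitely generated submodule rather than through $A$ directly, since the $r$-generator property is a hypothesis only about finitely generated submodules.
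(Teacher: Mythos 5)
Your proof is correct and follows essentially the same route as the paper: bound cyclic submodules by $\dim_F\bigl(\gp{a}_{FG}\bigr) \leq \dim_F(FG) = |G|$, deduce the uniform bound $\dim_F(B) \leq r|G|$ for every finitely generated $FG$-submodule $B$, and then pass from finitely generated submodules to $A$ itself. The only difference is in the last step, and it is cosmetic: the paper exhausts $A$ by a strictly ascending chain of finitely generated submodules (which must terminate because their dimensions are bounded by $r|G|$, incidentally showing $A$ is finitely generated), whereas you pick $r|G|+1$ linearly independent vectors and run the bound through the submodule they generate to get a contradiction --- both arguments are valid and rest on the same counting estimate.
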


\begin{proof}
Let  $0\not=a\in A$. Clearly,  $\gp{a}_{FG}\cong_F FG/Ann_{FG}^{left}(a)$ and
\[
\dim_{FG}(\gp{a}_{FG})= \dim_F((FG)a) \leq \dim_F(FG)= |G|.
\]
Let  $B_1\leq_{FG} A$ such that $B_1$ is  f.g. and  $B_1\not=A$. Since  $A$  has $r$-generator property, there exists a  finite subset  $M = \{ a_1, \ldots, a_m \}$ such that  $B_1= \gp{M}_{FG}$   and  $m \leq  r$. Thus
\[
\dim_F(B) \leq  \dim_F((FG)a_1) + \cdots+ \dim_F((FG)a_m) \leq  m|G| \leq  r|G|.
\]
If  $0\not=d \in A\setminus   B$, then   $B_2:= B_1 + (FG)d$  is a  f.g. module,   such that  $B_1 \leq B_2$ but  $B_1 \not= B_2$.
Similarly, continuing this process, we construct a family of proper submodules $\{B_k < A\mid k\in\mathbb{N}\}$ such that each $\dim_F(B_k) \leq  r|G|$. Consequently, there exists   $m\in\mathbb{N}$  such that  $B_m = A$, so $A$  is a f.g. module and   $\dim_F(A) \leq  r|G|$.\end{proof}

The intersection of all maximal $R$-submodules of an $R$-module $A$ over a ring  $R$  is called the {\it Frattini submodule} of  $A$  and denoted by    $Fratt_R(A)$.  For finitely generated modules, the intersection of all
of maximal submodules is also called the radical of the module
by analogy with the Jacobson radical for rings. Recently, however, some other radicals have appeared in modules,
therefore we followed the example of Hartley \cite{Hartley} and called the intersection of all maximal submodules the Frattini submodule.

We freely use the fact  that $Fratt_R(A)$ consists of non-generating elements of $A$.

\begin{lemma}\label{LeM:6}
Let  $A=\gp{M}$  be an  $R$-module over  a ring $R$.  If  $S$  is a finite subset of $M$  such that    $S\subseteq  Fratt_R(A)$, then $A=\gp{M \setminus S}$.
\end{lemma}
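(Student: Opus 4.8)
The plan is to proceed by induction on the cardinality $k=|S|$, peeling off the elements of $S$ one at a time and invoking the stated fact that each element of $Fratt_R(A)$ is a non-generating element of $A$. Recall that $a\in A$ being a non-generator means precisely that $\gp{X\cup\{a\}}=A$ forces $\gp{X}=A$ for every subset $X\subseteq A$; this is exactly the property that will allow a single element of $S$ to be discarded from a generating set without destroying the generation.

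First I would dispose of the base case $k=0$: then $S=\emptyset$, so $M\setminus S=M$ and $A=\gp{M}=\gp{M\setminus S}$ trivially. For the inductive step, write $S=\{s_1,\dots,s_k\}$ and single out one element, say $s_k\in S\subseteq Fratt_R(A)$. Since $s_k\in M$, we may decompose $M=(M\setminus\{s_k\})\cup\{s_k\}$, so the hypothesis $A=\gp{M}$ reads $A=\gp{(M\setminus\{s_k\})\cup\{s_k\}}$. Because $s_k$ is a non-generating element of $A$, deleting it from this generating set still yields a generating set, whence $A=\gp{M\setminus\{s_k\}}$.

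Now I would apply the induction hypothesis to the new generating set $M'=M\setminus\{s_k\}$ and the subset $S'=S\setminus\{s_k\}=\{s_1,\dots,s_{k-1}\}$. Here $S'\subseteq M'$, since the elements of $S$ are distinct and removing $s_k$ leaves $s_1,\dots,s_{k-1}$ untouched, and $S'\subseteq S\subseteq Fratt_R(A)$, the Frattini submodule being unchanged because $A$ itself is unchanged. Thus $(M',S')$ satisfies the hypotheses of the lemma with $|S'|=k-1$, and induction gives $A=\gp{M'\setminus S'}=\gp{M\setminus S}$, as required. The argument is a clean finite induction; the only point meriting care — and the nearest thing to an obstacle — is to make the meaning of ``non-generating element'' explicit so that the deletion of $s_k$ in the inductive step is rigorously justified by the cited fact, and to verify that the reduced pair $(M',S')$ genuinely falls under the induction hypothesis.
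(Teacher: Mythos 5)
Your induction scaffolding is fine and coincides with the paper's own closing line (``the rest follows by induction on $|S|$''), but it leaves the entire mathematical content of the lemma unproven. The crux is the singleton case: why can a single element $d\in M\cap Fratt_R(A)$ be deleted from the generating set $M$? You dispose of this by citing the ``stated fact'' that every element of $Fratt_R(A)$ is a non-generating element. In context that citation is circular: taking $X=M\setminus\{d\}$, the non-generator property of $d$ \emph{is} the singleton case of the lemma, and this lemma (with its proof) is precisely the paper's justification of that fact --- the sentence ``we freely use the fact\dots'' announces how the lemma will be applied later in the paper, it is not an independent prior result. Since the paper defines $Fratt_R(A)$ as the intersection of all maximal $R$-submodules, the statement that such elements are non-generators is a theorem requiring an argument, and your proposal never supplies one.

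The missing step is the paper's Zorn's lemma argument. Suppose $B=\gp{M\setminus\{d\}}_R$ were a proper submodule of $A$; then $d\notin B$, for otherwise $B$ would contain all of $M$ and hence equal $A$. Consider the family of all $R$-submodules of $A$ that contain $B$ and do not contain $d$; it is nonempty (it contains $B$) and closed under unions of chains, so Zorn's lemma gives a maximal element $C$. Any submodule $D$ with $C\leq D$ and $C\neq D$ must contain $d$, hence contains $M$, hence $D=A$; thus $C$ is a maximal $R$-submodule of $A$ with $d\notin C$. This contradicts $d\in Fratt_R(A)\leq C$. With this singleton case established, your inductive step and the verification that $(M',S')$ again satisfies the hypotheses are correct and complete the proof exactly as in the paper.
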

\begin{proof}
First, assume that $S=\{d\}$.  Suppose the contrary, let the  subset  $M \setminus  \{d\}$  generate a proper  $R$-submodule  $B$. Clearly  $d\not\in  B$. Let  $\mathfrak{S}(B)$  be a family of all  $R$-submodules of  $A$  such that each member of  $\mathfrak{S}(B)$ includes  $B$  and does not contain the element $d$. Clearly,   $B\subseteq \mathfrak{S}(B)\not=\emptyset$ and  $\mathfrak{S}(B)$  has a maximal element  $C$ by   Zorn's lemma. If  $D$  is an  $R$-submodule of  $A$  such that  $C \leq  D$  and       $C \not= D$, then  $d\in D$. However,  in this case  $M\leq D$, which implies that  $D = A$. Consequently,   $C$  is a maximal  $R$-submodule of  $A$. On the other hand, $d\not\in   C$, so  we get a contradiction with a choice of $d$, which  proves the  result. The rest follows by induction on $|S|$.
\end{proof}

\begin{lemma}\label{LeM:7}
Let  $A$  be a Noetherian  $R$-module over a ring $R$.  If  $M\subseteq A$  such that  $A/Fratt_R(A)=\gp{ a + Fratt_R(A) \mid  a \in  M}_R$, then  $A=\gp{M}_R$ and
\[
d_R(A) = d_R(A/Fratt_R(A)).
\]
\end{lemma}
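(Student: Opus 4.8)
The plan is to establish the two assertions in order, first proving $A=\gp{M}_R$ and then deducing the equality of the generator numbers from it. The essential tool is that a Noetherian module is finitely generated, which lets me replace the possibly infinite set $M$ and the (typically infinite) Frattini submodule by finite sets to which Lemma \ref{LeM:6} applies.

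For the first assertion, write $F:=Fratt_R(A)$ and $B:=\gp{M}_R$. The hypothesis that the images of $M$ generate $A/F$ says precisely that $B+F=A$. Since $A$ is Noetherian, $B$ is finitely generated, and as $B=\gp{M}_R$ each of its generators is an $R$-combination of finitely many elements of $M$; hence $B=\gp{M_0}_R$ for a suitable finite subset $M_0\subseteq M$. Also $A=\gp{a_1,\ldots,a_n}_R$ for finitely many $a_i$, and using $A=B+F$ I write $a_i=m_i+f_i$ with $m_i\in B=\gp{M_0}_R$ and $f_i\in F$. Then $A=\gp{M_0\cup\{f_1,\ldots,f_n\}}_R$. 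The finite set $\{f_1,\ldots,f_n\}$ lies in $F=Fratt_R(A)$, so Lemma \ref{LeM:6} lets me delete it from this generating set, giving $A=\gp{M_0}_R\leq\gp{M}_R\leq A$ and therefore $A=\gp{M}_R$.

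For the second assertion I use that, for a finitely generated module, $d_R$ equals the least cardinality of a generating set (any finite generating set contains a minimal one of no larger cardinality). The canonical epimorphism $\pi\colon A\to A/F$ carries any generating set of $A$ onto a generating set of $A/F$, so $d_R(A/F)\leq d_R(A)$. Conversely, choose a generating set $\{\bar a_1,\ldots,\bar a_s\}$ of $A/F$ with $s=d_R(A/F)$, lift it to $a_1,\ldots,a_s\in A$, and apply the first assertion with $M=\{a_1,\ldots,a_s\}$ to conclude $A=\gp{a_1,\ldots,a_s}_R$; hence $d_R(A)\leq s=d_R(A/F)$. Combining the two inequalities yields $d_R(A)=d_R(A/Fratt_R(A))$.

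The main obstacle is the reduction to finite sets in the first assertion: Lemma \ref{LeM:6} removes only finitely many non-generators at a time, so I must first exploit the Noetherian hypothesis to replace both $M$ (by a finite $M_0$) and the Frattini submodule (by the finitely many $f_i$ coming from a finite generating set of $A$) before the non-generator property can be invoked. Once this bookkeeping is arranged, the remainder is routine.
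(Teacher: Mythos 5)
Your proof is correct and follows essentially the same route as the paper's: both arguments reduce the first assertion to supplementing $M$ by finitely many elements of $Fratt_R(A)$ and then deleting them via Lemma \ref{LeM:6}, and your second assertion is the paper's same two-inequality argument (images of a minimal generating set give $d_R(A/Fratt_R(A))\leq d_R(A)$, lifts plus the first assertion give the reverse). The only difference is cosmetic: the paper produces the finitely many Frattini elements from a strictly ascending chain $\gp{M}_R\lneq\gp{M\cup\{a_1\}}_R\lneq\cdots$ that terminates by the ascending chain condition, whereas you decompose a finite generating set of $A$ along $A=\gp{M}_R+Fratt_R(A)$; these are equivalent uses of the Noetherian hypothesis.
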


Note that,  outwardly, our  Lemma \ref{LeM:7} is very similar to the following  consequence of the Nakayama's lemma \cite[Corollary 4.8(b), p.\,124]{Eisenbud}. In the Nakayama's lemma  \cite[Corollary 4.8(b), p.\,124]{Eisenbud},   was  considered  a submodule,  obtained by acting on the entire module by the radical of the ring. However in our Lemma \ref{LeM:7} we  consider  a factor-module by the radical of the module, i.e. at the intersection of all maximal submodules. We provide the following  simple example:

Let $A$  be  a cyclic group of order $p> 2$ which is considered as a module over the ring of integers $\mathbb{Z}$. The radical (or the Frattini submodule)  $Fratt_\mathbb{Z}(A)$ of the module $A$ is a subgroup of order $p$ and the radical $\mathcal{J}(\mathbb{Z})$ of the ring of integers $\mathbb{Z}$ is zero.  Consequently, $\mathcal{J}(\mathbb{Z}) A=0$.

\begin{proof}[Proof of Lemma \ref{LeM:7}]
Let $B=\gp{M}_R\leq_RA$ such that $A\not=B$. The choice of  $M$  shows that  $A = B + D$. Let    $B_1\leq_R A$ be  generated by $M \cup  \{a_1\}$, where $a_1\in D\setminus  B$ and $B_1\not=A$.
Set $B_2=\gp{M \cup  \{a_1\} \cup  \{a_2\}}_R$ such that   $a_2 \in  D\setminus   B_1$.  If  $B_2 \not= A$, then we repeat  arguments above. This process can not be infinite because   $A$  is a Noetherian  $R$-module, so  there exist $a_1, \ldots, a_n \in  D$  such that   $\gp{M \cup  \{ a_1, \ldots, a_n \}}_R=A$ and  $A=\gp{M}_R$.

Let $M = \{ a_1, \ldots, a_k \}\subseteq A$, such that $k\in\mathbb{N}$ is minimal with the properties  $A=\gp{M}_R$ and $k = d_R(A)$. Clearly, $A/D =\gp{a_1 + D, \ldots, a_k + D}_R$, where $D = Fratt_{FG}(A)$. It follows that  $d_R(A/D) \leq  d_R(A)$. On the other hand, let  $\{ x_1, \ldots, x_m \}$  be a finite  subset of  $A$  such that  $A/D=\gp{ x_1 + D, \ldots, x_m + D }_R$. Thus   $A=\gp{x_1, \ldots, x_m}_R$. Consequently,   $m  \geq  d_R(A)$ and  $d_R(A/D) \geq d_R(A)$, so that  $d_R(A/D) = d_R(A)$.
\end{proof}

\begin{lemma}\label{LeM:8}
Let  $FG$ be  the  group algebra of a finite group $G$ over  a field $F$.
Let  $A$  be a f.g.   $FG$-module. If  $d_{FG}(A) = k$, then   $A/Fratt_{FG}(A)$  is semisimple of  finite width.  Moreover, each homogeneous component has width at most  $k$  and there exists a homogeneous component, whose width is exactly  $k$.
\end{lemma}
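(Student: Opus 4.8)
The plan is to reduce the whole statement to the already-settled semisimple case by passing to the Frattini factor-module. First I would record the ambient finiteness: since $G$ is finite and $F$ is a field, the group algebra $FG$ is a finite-dimensional $F$-algebra, hence both Artinian and Noetherian. Because $A$ is finitely generated, it is finite-dimensional over $F$ and, in particular, a Noetherian $FG$-module; this is exactly the hypothesis needed to invoke Lemma~\ref{LeM:7}.

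The crucial structural step is to identify $Fratt_{FG}(A)$. By definition it is the intersection of all maximal $FG$-submodules of $A$, i.e. the radical of the module. Over the Artinian algebra $FG$ one has $Fratt_{FG}(A)=\mathcal{J}(FG)\cdot A$ for the finitely generated module $A$: the inclusion $\mathcal{J}(FG)A\subseteq Fratt_{FG}(A)$ is automatic, since $\mathcal{J}(FG)$ annihilates every simple factor $A/N$ and hence $\mathcal{J}(FG)A\subseteq N$ for each maximal submodule $N$; the reverse inclusion follows because $A/\mathcal{J}(FG)A$ is a module over the semisimple Artinian algebra $FG/\mathcal{J}(FG)$ and therefore has zero radical. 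Consequently $A/Fratt_{FG}(A)$ is a semisimple $FG$-module, and being finite-dimensional over $F$ it is a finite direct sum of simple submodules, i.e. it has finite width. This settles the first assertion.

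It then remains to transfer the numerical data. By Lemma~\ref{LeM:7}, applied to the Noetherian module $A$,
\[
d_{FG}(A)=d_{FG}\bigl(A/Fratt_{FG}(A)\bigr)=k.
\]
Thus $A/Fratt_{FG}(A)$ is a semisimple $FG$-module of finite width with $d_{FG}\bigl(A/Fratt_{FG}(A)\bigr)=k$, so the equivalence $(i)\Leftrightarrow(iii)$ of Corollary~\ref{Cor:6} applies verbatim: every homogeneous component of $A/Fratt_{FG}(A)$ has width at most $k$, and at least one homogeneous component has width exactly $k$. This is precisely the ``Moreover'' part of the statement, and the proof is complete.

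I expect the only genuinely non-formal point to be the identification $Fratt_{FG}(A)=\mathcal{J}(FG)A$ together with the semisimplicity of the quotient; once the quotient is known to be semisimple of finite width, Lemma~\ref{LeM:7} and Corollary~\ref{Cor:6} carry out the rest mechanically. If one prefers to avoid the Jacobson-radical language, the same semisimplicity can be obtained directly from the fact that a module of finite length over any ring has a semisimple quotient by the intersection of its maximal submodules, which is available here because $\dim_F(A)<\infty$.
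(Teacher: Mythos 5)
Your proof is correct, and its central step takes a genuinely different route from the paper's. Both arguments share the same skeleton: show that $A/Fratt_{FG}(A)$ is semisimple of finite width, then apply Lemma~\ref{LeM:7} to get $d_{FG}(A/Fratt_{FG}(A))=d_{FG}(A)=k$ and Corollary~\ref{Cor:6} to read off the widths of the homogeneous components; your endgame is identical to the paper's. The difference lies in how semisimplicity of the Frattini quotient is obtained. You identify $Fratt_{FG}(A)=\mathcal{J}(FG)A$ using the fact that $FG$ is a finite-dimensional (hence Artinian) algebra, so the quotient is a module over the semisimple algebra $FG/\mathcal{J}(FG)$ and semisimplicity is immediate; this is short and leans on standard structure theory. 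The paper instead avoids the Jacobson radical of the ring altogether: it takes finite intersections $V$ of maximal submodules, embeds $A/V$ into $\oplus_j A/V_j$ via Remak's theorem, and runs a descending-dimension argument (possible since $\dim_F(A)\leq k|G|$ is finite) to show that $Fratt_{FG}(A)$ is itself a finite intersection of maximal submodules, whence the quotient is semisimple. This module-level argument is essentially the alternative you sketch in your last sentence (finite length implies the radical quotient is semisimple). Note that the paper, in the discussion following Lemma~\ref{LeM:7}, deliberately distinguishes the module radical $Fratt_R(A)$ from $\mathcal{J}(R)A$ and gives a $\mathbb{Z}$-module example where they differ; your identification is valid precisely because $FG$ is Artinian here, so it is worth flagging that your argument, unlike the paper's, does not survive outside that setting.
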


\begin{proof}
Let $M = \{ a_1, \ldots, a_k \}\subseteq A$ such that $k\in\mathbb{N}$ is  minimal with the property $A=\gp{M}_{FG}$. Clearly, each
$\gp{a_j}_{FG}\cong FG/Ann_{FG}^{left}(a_j)$, $\dim_F((FG)a_j) \leq  |G|$ and
\[
\dim_F(A) = \dim_F(FGa_1 + \cdots+ FGa_k)\leq k|G|.
\]
Since   $\dim_F(A)$ is finite, $A$  is a Noetherian   $FG$-module.
Let  $\{ V_1, \ldots, V_s \}$  be a collection  of maximal  $FG$-submodules of  $A$.  Let  $V = V_1 \cap \cdots\cap V_s$. Using the Remak's theorem we obtain an embedding of $A/V$   into a direct sum     $A/V_1 \oplus \cdots\oplus A/V_s$, in which each  factor-module  $A/V_j$  is simple. Since each  submodule of a semisimple module is itself semisimple  (see \cite[ Corollary 4.4]{KOS2007}), $A/V$   is a semisimple  $FG$-module. It follows that  $Fratt_{FG}(A) \leq  V$.

Let   $0\not=b\in V\setminus  Fratt_{FG}(A)$. Obviously,     $A$ contains a maximal  $FG$-submodule  $W$  such that  $b \not\in  W$. Put  $U = V \cap W$, then  A/U  is embedded into  $A/V \oplus A/W$. Using the above arguments, we obtain that  $A/U$  is a semisimple  $FG$-module, so that again  $Fratt_{FG}(A) \leq  U$. In addition, the choice of  $U$  implies that  $U \not= V$, therefore  $\dim_F(U) < \dim_F(V)$. If  $Fratt_{FG}(A) \not= U$, then we repeat the above argument. This process should be finite, because  $\dim_F(A)$ is finite. Consequently,   $A/Fratt_{FG}(A)$  is semisimple of  finite width.

Finally  $d_R(A/Fratt_{FG}(A)) = d_R(A) = k$ by Lemma \ref{LeM:7}  and  use Corollary \ref{Cor:6}. \end{proof}

Consider now the case when a Sylow  $p$-subgroup of  $G$  is normal.

\begin{lemma}\label{LeM:9}
Let  $FG$ be  the  group algebra of a finite $p$-group $G$   over  a field $F$ of prime characteristic  $p$.
The length of a $G$-central series of an $FG$-module   $A$ is finite and bounded by $|G|$. \end{lemma}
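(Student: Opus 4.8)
The plan is to translate the notion of a $G$-central series into the language of the augmentation ideal and then to exploit its nilpotency. Write $\omega = \omega_F(G)$ for the augmentation ideal of $FG$, i.e.\ the kernel of the augmentation homomorphism $FG \to F$, so that $\omega$ is the $F$-subspace spanned by all $g-1$ with $g \in G$. For a series of $FG$-submodules $\gp{0} = A_0 \le A_1 \le \cdots \le A_n = A$, the requirement that $G$ centralise each factor $A_{j+1}/A_j$ means precisely that $a\circ g - a \in A_j$ for all $a \in A_{j+1}$ and all $g \in G$; since $a\circ g - a = (g-1)a$ and the elements $g-1$ span $\omega$, this is equivalent to $\omega A_{j+1} \le A_j$. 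Thus the $G$-central series of $A$ are exactly the series whose consecutive factors are annihilated by $\omega$.

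The key ring-theoretic ingredient is that $\omega$ is nilpotent with $\omega^{|G|}=0$. Since $G$ is a finite $p$-group and $\mathrm{char}\,F=p$, the trivial module is the only simple $FG$-module (see \cite{Curtis_Reiner}); as $\omega$ annihilates it, $\omega$ lies in every maximal left ideal, so $\omega\subseteq\mathcal{J}(FG)$, while $FG/\omega\cong F$ is semisimple and forces the reverse inclusion, giving $\omega=\mathcal{J}(FG)$. The Jacobson radical of the finite-dimensional algebra $FG$ is nilpotent, hence so is $\omega$. For the explicit bound I would follow the radical filtration $FG\supseteq\omega\supseteq\omega^2\supseteq\cdots$: by Nakayama's lemma an equality $\omega^{k+1}=\omega^k$ forces $\omega^k=0$, so as long as $\omega^k\neq 0$ the inclusion $\omega^{k+1}\subseteq\omega^k$ is strict and the $F$-dimensions strictly decrease from $\dim_F(FG)=|G|$. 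Therefore the filtration reaches $0$ in at most $|G|$ steps, which yields $\omega^{|G|}=0$.

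With both facts in hand I would consider the lower $G$-central series
\[
A = \omega^0 A \;\ge\; \omega A \;\ge\; \omega^2 A \;\ge\; \cdots \;\ge\; \omega^{|G|} A = \gp{0}.
\]
Each $\omega^i A$ is an $FG$-submodule and $\omega(\omega^i A)=\omega^{i+1}A$, so by the translation of the first paragraph every factor is centralised by $G$; thus this is a $G$-central series of $A$. Because $\omega^{|G|}=0$ holds already in $FG$, we get $\omega^{|G|}A=0$ irrespective of $\dim_F A$, so after deleting repetitions the series has length at most $|G|$. Hence $A$ possesses a $G$-central series of length at most $|G|$, and the length of a $G$-central series of $A$ is finite and bounded by $|G|$, as claimed.

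I expect the main obstacle to be the explicit nilpotency bound $\omega^{|G|}=0$ rather than the translation or the construction of the series: identifying $\omega$ with the Jacobson radical and controlling the length of the radical filtration by $\dim_F(FG)=|G|$ is where the hypotheses that $G$ is a $p$-group and that $\mathrm{char}\,F=p$ are genuinely used. Everything else is formal once a $G$-central series has been recognised as a series whose factors are annihilated by $\omega$.
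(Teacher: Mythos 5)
Your argument is correct, but it takes a genuinely different route from the paper's. You work entirely inside the ring: after identifying $G$-central series with series whose factors are annihilated by the augmentation ideal $\omega$, you prove $\omega=\mathcal{J}(FG)$ (using that the trivial module is the only simple $FG$-module for a $p$-group in characteristic $p$), and you get $\omega^{|G|}=0$ from Nakayama's lemma together with the strictly decreasing dimension count along the radical filtration of $FG$; the series $\omega^iA$ then works uniformly for every module $A$, the bound coming from $FG$ alone. The paper instead argues by induction on $|G|$: for a single element $g$ it uses the characteristic-$p$ identity $(g-1)^{|g|}=g^{|g|}-1=0$ to obtain the $\gp{g}$-central series $(g-1)^iA$; it then chooses $1\neq g\in\zeta(G)$, so that each $(g-1)^iA$ is an $FG$-submodule, regards each factor as an $F(G/\gp{g})$-module, and composes with the series supplied by the induction hypothesis, getting length at most $|g|\cdot|G/\gp{g}|=|G|$. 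The paper's proof is more elementary and self-contained (only the freshman's-dream identity and induction through the center), whereas yours leans on standard structure theory (classification of simple $FG$-modules, Jacobson radical, Nakayama) but in exchange isolates the clean ring-theoretic fact $\omega^{|G|}=0$, from which the module statement is immediate for all $A$ at once. One small notational slip: you write $a\circ g-a=(g-1)a$, mixing a right action with left multiplication; for right modules the translation should read $A_{j+1}\omega\le A_j$, but this does not affect the argument.
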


\begin{proof}
A  semidirect product   $A \rtimes  G$ is a nilpotent group (see  \cite[Lemma 3.8, p.\, 228]{BG1959}).   In particular,      $A \rtimes  \gp{ g }$   is also nilpotent for each element  $g \in  G$.

Set $A_0:= A$ and  $A_i: = (g-1)^iA$ for $i\geq 1$.  It is easy to see that
\[
\begin{split}
A_k &\not= \gp{0},  \quad   (g-1)A_k \not= A_k, \qquad\qquad (0\leq k\leq |g|-1)\\
A_{|g|} &= (g-1)^{|g|}A = (g^{|g|}-1)A = \gp{0}.
\end{split}
\]
Now we use induction on  $|G|$. If  $|G|=p$, then  $G=\gp{g}$  is  cyclic and
\begin{equation}\label{EFFsR}
A = A_0 \geq A_1 \geq \cdots\geq A_{|g|} = \gp{0}
\end{equation}
is a $G$-central series.

Assume now $|G| > p$. Since  a finite  $p$-group is nilpotent,   $\zeta(G) \not= \gp{1}$ and there exists  $1 \not= g \in  \zeta(G)$ such that  the  series \eqref{EFFsR}
is  $\gp{g}$-central, so that  $g \in  \mathrm{C}_G(A_j/A_{j + 1})$, where $\mathrm{C}_G(X)$ is the centralizer of a set $X$ in $G$.  An inclusion $ g \in  \zeta(G)$  shows that  each $A_j$  is an  $FG$-submodule of  $A$. Consequently, each  factor  $A_j/A_{j + 1}$ can be considered  as an  $F(G/\gp{g})$-module. Using the inequality    $|G/\gp{g}| <|G|$ (because  $g \not= 1$), we can apply the  induction hypothesis for  $A_j/A_{j + 1}$. By the induction  hypothesis, $A_j/A_{j + 1}$  has a  $(G/\gp{g})$-central series whose length is at most  $|G/\gp{g}|$. Obviously,  factors of such series are  also  $G$-central for each  $0 \leq j \leq k-1$. Thus   $A$  has a $G$-central series whose length is at most  $|g| |G/\gp{g}|= |G|$.
\end{proof}

\section{Proofs}
\begin{proof}[Proof of Theorem \ref{Th:1}]
Let $A$  be an  $FG$-module in which $FG$ is the group algebra of a finite group $G$ over the   field $F$. Since  $A$  has finite $r$-generator property,  $A$  has  finite $F$-dimension by Lemma \ref{LeM:5}, so $A$ has a finite series of  $FG$-submodules, whose factors are semisimple.   Let  $B\leq_{FG} A$  such that  $d_{FG}(B) = r$ and   $D = Fratt_{FG}(B)$. Corollary \ref{LeM:7} implies that  $d_{FG}(B) = d_{FG}(B/D) = r$ and the  factor-module   $B/D$  is semisimple and it has  finite width by Lemma   \ref{LeM:8}.  Moreover, every homogeneous component has width at most  $r$  and there exists a homogeneous component, whose width is exactly  $r$.

Let  $V/W$  be a semisimple homogeneous  $FG$-factor and set $U:= Fratt_{FG}(V)$. Obviously,   $U \leq  W$ and  the inequality   $d_{FG}(V) \leq  r$  implies that  $d_{FG}(V/U) \leq  r$ by Corollary \ref{LeM:7}.  Consequently,  $V/U$  is semisimple   by Lemma   \ref{LeM:8} and each of its homogeneous components has width at most  $r$. The isomorphism
\[
V/W \cong_{FG}(V/U)/(W/U)
\]
implies that  $V/W$  has  width at most  $r$.

Conversely, let  $C/E$  be a semisimple homogeneous  $FG$-factor, having width  $r$. Thus  $Fratt_{FG}(C) \leq  E$, so $C/Fratt_{FG}(C)$  has a homogeneous factor whose width is exactly  $r$. By the statements  of our  theorem,  each  another homogeneous factor of  $C/Fratt_{FG}(C)$  has  width at most  $r$. Then $d_{FG}(C/Fratt_{FG}(C)) = r$ by Lemma   \ref{LeM:8}, and    $d_{FG}(C) = r$ by Corollary \ref{LeM:7}. Finally, let  $S$  be an $FG$-submodule of  $A$. The factor  $S/Fratt_{FG}(S)$  is semisimple  by Lemma   \ref{LeM:8}  and  each  homogeneous factor of  $S/Fratt_{FG}(S)$  has  width at most  $r$ by  parts (i)-(ii)  of our  theorem.   Finally, using Lemma   \ref{LeM:8}, we obtain that  $d_{FG}(S/Fratt_{FG}(S)) \leq  r$ and $d_{FG}(S) \leq  r$, so  $A$  has $r$-generator property.
\end{proof}

\begin{proof}[Proof of Theorem \ref{Th:2}]
Let  $F$  be a field of prime characteristic  $p$  and  let $G$  be a finite group, such   that   $p$  does not divide  $|G|$. If  $A$  is an  $FG$-module, then  $A$  is semisimple  (see \cite[Corollary 5.15]{KOS2007}). It follows that the  group ring $FG$ is semisimple. Denote by  $nns_F(G)$  the number of pairwise non-isomorphic simple  $FG$-modules. This number is finite  (see \cite{BS1952, BS1956})  and can be  calculated  in  the following way.

Let $\xi$ be  a primitive $n$-th root of unity, where  $n$ is the greatest divisor of $\exp(G)$ which is not divisible by $char(F)$. Two  elements  $a, b \in G$  are  called  {\it $F$-conjugate} if  there exist  $x \in  G$  and   $m\in \mathbb{N}$  such that  $x^{-1}ax =b^m$ and  the map  $\xi\mapsto \xi^m$  is extensible to an automorphism of the field  $F(\xi)$  fixing  the subfield  $F$  elementwise. Berman \cite{BS1956} and Witt \cite{Witt}  proved that  $\mathrm{nns}_F(G)$  is equal to the number of $F$-conjugate classes of  $F$-regular elements of   $G$ (see \cite[Theorem 42.8 p.\,306]{Curtis_Reiner}).   If  $F$ is   algebraically closed, then  Brauer \cite{BR1956, Curtis_Reiner} shows that  $\mathrm{nns}_F(G)$  is equal to the number of the conjugate classes of  elements of   $G$ (see \cite[Theorem 40.1 p.\,283]{Curtis_Reiner}).  In both  cases   $\mathrm{nns}_F(G)$ is bounded by the number of conjugacy classes of $G$.

Since    $\dim_F(A)$ is  finite,    $A =\oplus_{j=1}^n H_j$ in which  each  $H_j$  is a homogeneous component of  $A$. As we noted above  $n \leq nns_F(G)$. The fact that  $H_j$  is a direct sum of at most  $r$  simple submodules follows from Corollary \ref{Cor:6}(i-iii). \end{proof}

\begin{proof}[Proof of Theorem \ref{Th:3}]
Clearly,    $\dim_F(A)$ is  finite  by  Lemma \ref{LeM:5} and   $A$  has a series of  $FG$-submodules (see  Lemma \ref{LeM:9})
\[
\gp{0} = Z_0 \leq   Z_1 \leq  \cdots\leq  Z_{m-1} \leq  Z_m = A
\]
whose factors are  $p$-central, that is  $P \leq  \mathrm{C}_G(Z_{j + 1} /Z_j)$ for each  $0 \leq  j \leq  m-1$. It follows that  each $G/\mathrm{C}_G(Z_{j + 1} /Z_j)$  is a  $p$-group, so we can apply Theorem \ref{Th:2} to each of these factors.
\end{proof}

\bibliographystyle{abbrv}

\end{document}